\definecolor{e-mail}{rgb}{0,.40,.80}
\definecolor{reference}{rgb}{.20,.60,.22}
\definecolor{citation}{rgb}{0,.40,.80}
\newtheorem{thm}{Theorem}
\newtheorem*{mthm}{Main Theorem}
\newtheorem*{thm*}{Theorem~\protect{\ref{thm:main}}}
\newtheorem{cor}[thm]{Corollary}
\newtheorem{lem}[thm]{Lemma}
\newtheorem{prop}[thm]{Proposition}
\theoremstyle{definition}
\newtheorem{defn}[thm]{Definition}
\theoremstyle{remark}
\newtheorem{rem}[thm]{Remark}
\numberwithin{thm}{section}
\theoremstyle{definition}
\theoremstyle{definition}
\newtheorem{eg}[thm]{Example}
\theoremstyle{definition}
\numberwithin{equation}{section}
\newcommand{\K}{\mathbb K} %fields
\title{Mahler discrete residues and summability for rational functions}
\author{Carlos E. Arreche}
\address{Department of Mathematical Sciences, The University of Texas at Dallas, Richardson, TX 75080}
\email{arreche@utdallas.edu}
 \thanks{The work of C.E.~Arreche was partially supported by NSF grant CCF-1815108.} 
\author{Yi Zhang}
\address{Department of Foundational  Mathematics, School of Science, Xi'an Jiaotong-Liverpool University,
 Suzhou, 215123, China}
\email{Yi.Zhang03@xjtlu.edu.cn}
\thanks{Y.~Zhang was supported by XJTLU Research Development Funding No.\ RDF-20-01-12, the NSFC Young Scientist Fund No.\ 12101506, and the Natural Science Foundation of the Jiangsu Higher Education Institutions of China No.\ 21KJB110032.}
\date{\today}
\begin{document}

\begin{abstract}
We construct Mahler discrete residues for rational functions and show that they comprise a complete obstruction to the Mahler summability problem of deciding whether a given rational function $f(x)$ is of the form $g(x^p)-g(x)$ for some rational function $g(x)$ and an integer $p > 1$. This extends to the Mahler case the analogous notions, properties, and applications of discrete residues (in the shift case) and $q$-discrete residues (in the $q$-difference case) developed by Chen and Singer. Along the way we define several additional notions that promise to be useful for addressing related questions involving Mahler difference fields of rational functions, including in particular telescoping problems and problems in the (differential) Galois theory of Mahler difference equations.
\end{abstract}

\keywords{Mahler operator, difference fields, difference equations, partial fractions, discrete residues, summability, creative telescoping}

\maketitle

%---------------------------------------------

\section{Introduction} \label{sec:introduction}
Continuous residues are fundamental tools in complex analysis, and have extensive and compelling applications in combinatorics~\cite{Flajolet_Sedgewick}. In the last decade, a theory of ($q$-)discrete residues was proposed in~\cite{ChenSinger2012} for the study of telescoping problems, which has found essential applications in several other closely related problems (see \cite{HouWang:2015,arreche:2017,Chen:2018,ArrecheZhang2020} for some examples). A theory of residues for skew rational functions was developped in \cite{Caruso2021}, which has applications in duals of linearized Reed-Solomon codes \cite{CarusoDurand2021}. The elliptic orbit residues defined in \cite{HardouinSinger2021} have applications in the combinatorial study of walks in the quarter plane. We propose here a theory of Mahler discrete residues aimed at bringing to the Mahler case the successes of these earlier notions of residues.

Let $\K$ be a field of characteristic zero and $\K(x)$ be the field of rational functions in an indeterminate $x$ over $\K$. Fix an integer $p\geq 2$. We study the \emph{Mahler summability problem for rational functions}: given $f(x)\in\mathbb{K}(x)$, decide effectively whether $f(x)=g(x^p)-g(x)$ for some $g(x)\in\mathbb{K}(x)$; if so, we say $f(x)$ is \emph{Mahler summable}. 

The motivation to study Mahler equations comes from several directions: they find applications in automata theory (automatic sequences), transcendence, and number theory, to name a few. We refer to~\cite{Dreyfus2018} for more details, and also for an altogether different approach to Mahler summability: the algorithm of \cite[\S3]{Dreyfus2018} computes all the rational solutions to any linear Mahler equation. Thus with this one can decide, in particular, whether a \emph{given} $f(x)\in\mathbb{K}(x)$ is Mahler summable by computing (or showing non-existence of) a certificate $g(x)\in\mathbb{K}(x)$ such that $f(x)=g(x^p)-g(x)$.

Our goal here is different: we wish to construct a complete obstruction to Mahler summability. Let us elaborate. The image of the $\mathbb{K}$-linear map $\Delta: g(x)\mapsto g(x^p)-g(x)$ is the kernel of some other $\mathbb{K}$-linear map (call it $\nabla$) --- but what is it? Determining such a $\nabla$ explicitly is algorithmically desirable because it allows to decide the Mahler summability of $f(x)\in\mathbb{K}(x)$ without computing the certificate $g(x)\in\mathbb{K}(x)$, whose computation is often in practice both expensive and not strictly necessary (cf.~\cite[\S1 \& Table~1]{BCCL:2010}, \cite[\S1 \& Table 1]{BCCLX:2013}, and \cite[\S1]{BLS:2013}). We construct such a $\nabla$ explicitly in Section~\ref{sec:reduction}, in terms of our new notion of \emph{Mahler discrete residues} for rational functions, and prove in Section~\ref{sec:proof}:

\begin{mthm}\label{thm:main} $f(x)\in\mathbb{K}(x)$ is Mahler summable if and only if all of the Mahler discrete residues of $f$ are zero.
\end{mthm}

The discrete and $q$-discrete residues developed in \cite{ChenSinger2012} comprise complete obstructions to the analogous summability problems for $f(x)\in\mathbb{K}(x)$, of deciding whether there exists $g(x)\in\mathbb{K}(x)$, such that $f(x)=g(x+1)-g(x)$, or such that $f(x)=g(qx)-g(x)$, for $q\in\mathbb{K}$ neither zero nor a root of unity. This theoretical property of ($q$-)discrete residues is precisely what enables their applications to the telescoping problems considered in \cite{ChenSinger2012} and their indispensable role in the development of the algorithms in \cite{arreche:2017, ArrecheZhang2020}. We envision analogous applications of Mahler discrete residues to telescoping problems and in the development of algorithms to compute (differential) Galois groups for Mahler difference equations.
 
Our strategy is inspired by that of \cite{ChenSinger2012} (but see Remark~\ref{rem:mahler-phenomena}): we utilize the coefficients in the partial fraction decomposition of $f(x)$ to construct an aspiring certificate $g(x)\in\mathbb{K}(x)$ such that \begin{equation}\label{eq:mahler-reduction}\bar{f}(x):=f(x)+\bigl(g(x^p)-g(x)\bigr)\end{equation} is Mahler summable if and only if $\bar{f}(x)=0$. The Mahler discrete residues of $f(x)$ are (vectors whose components are) the coefficients occurring in the partial fraction decomposition of $\bar{f}(x)$. This $\bar{f}(x)$ plays the role of a \emph{Mahler remainder} of $f(x)$, analogous to the remainder of Hermite reduction in the context of integration.

%-------------------------------------------------------

\section{Preliminaries} \label{sec:preliminaries}
Here we define the notation and conventions used throughout this work, and prove some ancillary results. We fix once and for all an algebraically closed field $\mathbb{K}$ of characteristic zero and an integer $p\geq 2$ (not necessarily prime). We denote by $\mathbb{K}(x)$ the field of rational functions in an indeterminate $x$ over $\mathbb{K}$. We often suppress the functional notation and write simply $f\in\mathbb{K}(x)$ instead of $f(x)$.

\begin{defn}\label{defn:basic}We denote by $\sigma:\mathbb{K}(x)\rightarrow \mathbb{K}(x)$ the $\mathbb{K}$-linear endomorphism defined by $\sigma(x)=x^p$, called the \emph{Mahler operator}, so that $\sigma(f(x))=f(x^p)$ for $f(x)\in\mathbb{K}(x)$. We write $\Delta:=\sigma-\mathrm{id}$, so that $\Delta(f(x))=f(x^p)-f(x)$ for $f(x)\in\mathbb{K}(x)$.

We say that $f\in \mathbb{K}(x)$ is \emph{Mahler summable} if $f=\Delta(g)$ for some $g\in\mathbb{K}(x)$. The \emph{Mahler summability problem for rational functions} is: given $f\in\mathbb{K}(x)$, decide whether $f$ is Mahler summable.
\end{defn}

Let $\mathbb{K}^\times=\mathbb{K}\backslash\{0\}$ denote the multiplicative group of $\mathbb{K}$. Let $\mathbb{K}^\times_t$ denote the torsion subgroup of $\mathbb{K}^\times$, i.e., the group of roots of unity in $\mathbb{K}^\times$. For $\zeta\in\mathbb{K}^\times_t$, the \emph{order} of $\zeta$ is the smallest $r\in\mathbb{N}$ such that $\zeta^r=1$. We fix once and for all a \emph{compatible system} of $p$-power roots of unity $(\zeta_{p^n})_{n\geq 0} \subset \mathbb{K}^\times_t$, that is, each $\zeta_{p^n}$ has order $p^n$ and $\smash{\zeta_{p^n}^{p^\ell}=\zeta_{p^{n-\ell}}}$ for $0\leq\ell\leq n$. We denote by $\pi^n_\ell:\mathbb{Z}/p^n\mathbb{Z}\twoheadrightarrow\mathbb{Z}/p^\ell\mathbb{Z}$ and by $\pi_n:\mathbb{Z}\twoheadrightarrow\mathbb{Z}/p^n\mathbb{Z}$ the canonical projections.

Each $f\in\mathbb{K}(x)$ decomposes uniquely as \begin{equation}\label{eq:f-decomposition}f=f_L+f_T,\quad\text{where}\end{equation} $f_L\in\mathbb{K}[x,x^{-1}]$ is a Laurent polynomial and $f_T=\frac{a}{b}$ for polynomials $a,b\in\mathbb{K}[x]$ such that $b\neq 0$ and, either $a=0$, or else $\mathrm{deg}(a)<\mathrm{deg}(b)$ and $\mathrm{gcd}(a,b)=1=\mathrm{gcd}(x,b)$. The subscript $L$ stands for ``Laurent''. The subscript $T$ stands for ``Tree'' (see Definition~\ref{defn:trees}).

\begin{lem}\label{lem:rational-decomposition} The decomposition $\mathbb{K}(x)\simeq \mathbb{K}[x,x^{-1}]\oplus \mathbb{K}(x)_T$ given by $f\leftrightarrow f_L\oplus f_T$ as in \eqref{eq:f-decomposition} is $\sigma$-stable. For $f,g\in\mathbb{K}(x)$, $f=\Delta(g)$ if and only if $f_L=\Delta(g_L)$ and $f_T=\Delta(g_T)$.
\end{lem}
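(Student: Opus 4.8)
The plan is to verify directly that $\sigma$ maps each of the two summands $\mathbb{K}[x,x^{-1}]$ and $\mathbb{K}(x)_T$ into itself, and then to deduce the statement about $\Delta$ from the uniqueness of the decomposition \eqref{eq:f-decomposition}. The first summand is immediate: if $f_L=\sum_k c_k x^k\in\mathbb{K}[x,x^{-1}]$ is a Laurent polynomial, then $\sigma(f_L)=\sum_k c_k x^{pk}$ is again a Laurent polynomial, so $\sigma(\mathbb{K}[x,x^{-1}])\subseteq\mathbb{K}[x,x^{-1}]$.

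For the second summand, write $f_T=a/b$ with $a,b\in\mathbb{K}[x]$ in the normalized form of \eqref{eq:f-decomposition}, so $b\neq 0$ and either $a=0$ or else $\mathrm{deg}(a)<\mathrm{deg}(b)$ and $\mathrm{gcd}(a,b)=1=\mathrm{gcd}(x,b)$. Then $\sigma(f_T)=a(x^p)/b(x^p)$, and I would check that the pair $(a(x^p),b(x^p))$ still satisfies these conditions: the degree inequality is preserved since $\mathrm{deg}(a(x^p))=p\,\mathrm{deg}(a)<p\,\mathrm{deg}(b)=\mathrm{deg}(b(x^p))$; the coprimality $\mathrm{gcd}(a(x^p),b(x^p))=1$ follows by applying the ring endomorphism $x\mapsto x^p$ of $\mathbb{K}[x]$ to a B\'ezout identity $ua+vb=1$; and $\mathrm{gcd}(x,b(x^p))=1$ because $\mathrm{gcd}(x,b)=1$ forces $b(0)\neq 0$, whence $b(x^p)$ has nonzero constant term $b(0)$. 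Thus $\sigma(\mathbb{K}(x)_T)\subseteq\mathbb{K}(x)_T$, and the decomposition is $\sigma$-stable; consequently $\Delta=\sigma-\mathrm{id}$ also maps each summand into itself, i.e.\ $\Delta(g)_L=\Delta(g_L)$ and $\Delta(g)_T=\Delta(g_T)$ for every $g\in\mathbb{K}(x)$.

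Finally, for the equivalence: if $f=\Delta(g)$ then $f_L+f_T=\Delta(g_L)+\Delta(g_T)$ with $\Delta(g_L)\in\mathbb{K}[x,x^{-1}]$ and $\Delta(g_T)\in\mathbb{K}(x)_T$, so the uniqueness in \eqref{eq:f-decomposition} gives $f_L=\Delta(g_L)$ and $f_T=\Delta(g_T)$; conversely, adding these two equations yields $f=\Delta(g)$. I do not anticipate a real obstacle here; the only point requiring a little care is the bookkeeping for the $T$-part, namely observing that even if $a(x^p)/b(x^p)$ is not already in lowest terms, any common factor cancelled from numerator and denominator leaves the denominator still coprime to $x$ and the fraction still proper, so the normalized form of $\sigma(f_T)$ indeed lies in $\mathbb{K}(x)_T$.
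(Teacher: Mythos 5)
Your proof is correct and follows essentially the same route as the paper: check that $\sigma$ preserves each summand (Laurent polynomials trivially, and $\mathbb{K}(x)_T$ because $\sigma$ preserves the normalization conditions on $a/b$, with coprimality obtained from a B\'ezout identity where the paper invokes the Euclidean algorithm), then deduce the $\Delta$-equivalence from uniqueness of the direct-sum decomposition. Your closing caveat about cancelling common factors is superfluous, since you have already shown $\gcd(a(x^p),b(x^p))=1$, but it does no harm.
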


\begin{proof} We see that $\sigma(f_L)\in\mathbb{K}[x,x^{-1}]$ for any $f_L\in\mathbb{K}[x,x^{-1}]$. By the Euclidean algorithm, $\mathrm{gcd}(\sigma(a),\sigma(b))=\sigma(\mathrm{gcd}(a,b))$ for any $0\neq a,b\in\mathbb{K}[x]$. Thus the $\mathbb{K}$-subspace $\mathbb{K}(x)_T$ is also stabilized by $\sigma$. It follows that $\Delta(g)=\Delta(f)$ if and only if $\Delta(g_L)=f_L$ and $\Delta(g_T)=f_T$, for any $f,g\in\mathbb{K}(x)$
\end{proof}

%-------------------------------------------------------

\subsection{Mahler trajectories, trees, and cycles}\label{sec:trajectories-trees-cycles}

We let $\mathcal{P}:=\{p^n \ | \ n\in\mathbb{Z}_{\geq 0}\}$ denote the multiplicative monoid of non-negative powers of $p$. Then $\mathcal{P}$ acts on $\mathbb{Z}$ by multiplication, and the set of \emph{maximal trajectories} for this action is \[\mathbb{Z}/\mathcal{P} := \bigl\{\{0\}\bigr\} \cup \bigl\{\{ip^n \ | \ n\in\mathbb{Z}_{\geq 0}\} \ \big| \ i\in\mathbb{Z} \ \text{such that} \ p\nmid i \bigr\}.\]

\begin{rem}The usage of \emph{trajectory} is perhaps unfamiliar to some readers: it is standard in the context of monoid (and more generally semigroup) actions, and replaces the more familiar notion of \emph{orbit} for group actions. As in that more familiar setting, the elements $\theta\in\mathbb{Z}/\mathcal{P}$ are pairwise disjoint sets whose union is all of $\mathbb{Z}$.\end{rem}

\begin{defn}\label{defn:trajectory-projection}
For a maximal trajectory $\theta\in\mathbb{Z}/\mathcal{P}$, the $\theta$\emph{-subspace} \begin{equation}\label{eq:trajectory-subspace}  \mathbb{K}[x,x^{-1}]_\theta := \left\{\sum_j c_j x^j\in\mathbb{K}[x,x^{-1}] \ \middle| \ c_j=0 \ \text{for all} \ j \notin \theta\right\}.\end{equation} The $\theta$\emph{-component} $f_\theta$ of $f\in\mathbb{K}(x)$ is the projection of the component $f_L$ of $f$ in \eqref{eq:f-decomposition} to the $\theta$-subspace $\mathbb{K}[x,x^{-1}]_\theta$ in \eqref{eq:trajectory-subspace}.\end{defn}

\begin{lem}\label{lem:trajectory-projection} For $f,g\in\mathbb{K}(x)$, $f_L=\Delta(g_L)$ if and only if $f_\theta=\Delta(g_\theta)$ for every $\theta\in\mathbb{Z}/\mathcal{P}$.\end{lem}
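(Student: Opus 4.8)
The plan is to reduce the statement to the single observation that the Mahler operator $\sigma$ respects the direct sum decomposition of $\mathbb{K}[x,x^{-1}]$ indexed by the maximal trajectories. First I would record the elementary fact that every $\theta\in\mathbb{Z}/\mathcal{P}$ is closed under multiplication by $p$: this is trivial if $\theta=\{0\}$, and if $\theta=\{ip^n\mid n\in\mathbb{Z}_{\geq 0}\}$ with $p\nmid i$, then $p\cdot(ip^n)=ip^{n+1}\in\theta$. Since $\sigma(x^j)=x^{pj}$, it follows that $\sigma$ maps $\mathbb{K}[x,x^{-1}]_\theta$ into itself for each $\theta$, and hence so does $\Delta=\sigma-\mathrm{id}$.

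Next I would invoke that the monomials $\{x^j\mid j\in\mathbb{Z}\}$ form a $\mathbb{K}$-basis of $\mathbb{K}[x,x^{-1}]$ and that the maximal trajectories $\theta\in\mathbb{Z}/\mathcal{P}$ partition $\mathbb{Z}$, so that $\mathbb{K}[x,x^{-1}]=\bigoplus_{\theta\in\mathbb{Z}/\mathcal{P}}\mathbb{K}[x,x^{-1}]_\theta$, with the projection onto the $\theta$-summand sending $f_L\mapsto f_\theta$ and $g_L\mapsto g_\theta$ in the notation of Definition~\ref{defn:trajectory-projection}. Because each summand is $\Delta$-stable, these projections commute with $\Delta$. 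Thus, writing $f_L=\sum_\theta f_\theta$ and $g_L=\sum_\theta g_\theta$ (finite sums), applying $\Delta$ termwise gives $\Delta(g_L)=\sum_\theta\Delta(g_\theta)$ with each $\Delta(g_\theta)\in\mathbb{K}[x,x^{-1}]_\theta$; by uniqueness of the decomposition, $f_L=\Delta(g_L)$ holds if and only if $f_\theta=\Delta(g_\theta)$ for every $\theta\in\mathbb{Z}/\mathcal{P}$. (For the forward direction one projects the identity $f_L=\Delta(g_L)$ onto each $\theta$-summand; for the converse one simply sums the equations $f_\theta=\Delta(g_\theta)$ over $\theta$.)

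I do not expect any serious obstacle: the one point meriting a moment's care is that the direct sum is infinite, but this is harmless since for any fixed $f_L$ (resp.\ $g_L$) only finitely many components $f_\theta$ (resp.\ $g_\theta$) are nonzero, so every sum that appears is finite and $\Delta$ commutes with it. It is worth emphasizing that we never use surjectivity of $\sigma$ on a $\theta$-subspace --- only the $\sigma$-stability of the decomposition, exactly as in the proof of Lemma~\ref{lem:rational-decomposition}.
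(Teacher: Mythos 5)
Your proof is correct and takes exactly the same route as the paper, which simply observes that the decomposition $\mathbb{K}[x,x^{-1}]\simeq\bigoplus_{\theta\in\mathbb{Z}/\mathcal{P}}\mathbb{K}[x,x^{-1}]_\theta$ is $\sigma$-stable and concludes from there. You have merely spelled out the details (closure of each trajectory under multiplication by $p$, finiteness of the sums, projection onto summands) that the paper leaves implicit.
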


\begin{proof} This follows by observing that the $\mathbb{K}$-decomposition $\mathbb{K}[x,x^{-1}]  \hspace{.155em}\simeq  \hspace{.155em}\bigoplus_{\theta\in\,\mathbb{Z}/\mathcal{P} }\mathbb{K}[x,x^{-1}]_\theta$ is $\sigma$-stable (cf.~\cite[\S5]{CHLW:2016}).
\end{proof}

\begin{defn} \label{defn:trees} We denote by $\mathcal{T}_M$ the set of equivalence classes in $\mathbb{K}^\times$ for the equivalence relation $\alpha\sim \gamma\Leftrightarrow\alpha^{p^r}=\gamma^{p^s}$ for some $r,s\in\mathbb{Z}_{\geq 0}$. For $\alpha\in\mathbb{K}^\times$, we denote by $\tau(\alpha)\in\mathcal{T}_M$ the equivalence class of $\alpha$ under $\sim$. The elements $\tau\in\mathcal{T}_M$ are called \emph{Mahler trees}.
\end{defn}

\begin{rem} \label{rem:terminology} 
The usage of \emph{tree} in Definition~\ref{defn:trees} is motivated by the fact that one can define a digraph structure $D(\tau(\alpha))$ on the vertex set $\tau(\alpha)$ with an edge from $\xi$ to $\gamma$ whenever $\xi^p=\gamma$, whose underlying (undirected) graph is connected and acyclic provided that $\alpha\notin\mathbb{K}^\times_t$. We find the terminology useful and suggestive even when $\alpha \in\mathbb{K}^\times_t$, because even in this exceptional case we do obtain a tree after collapsing the unique cycle in $D(\tau)$ defined below.
\end{rem}

\begin{defn} \label{defn:cycles} For a Mahler tree $\tau\in\mathcal{T}_M$, the \emph{Mahler cycle} of $\tau$ is \[\mathcal{C}(\tau):=\{\gamma \in \tau \ | \ \gamma \ \text{is a root of unity of order coprime to } p\}.\] The \emph{cycle length} of $\tau$ is defined to be $e(\tau):=|\mathcal{C}(\tau)|$.
\end{defn}

\begin{eg} \label{eg:trees} (Cf.~\cite[Figures~4 and 5]{Dreyfus2018}). Let us illustrate the definitions of Mahler trees and Mahler cycles with $\mathbb{K}=\mathbb{C}$ and $p=3$. In this example we write $\zeta_n:=e^{\frac{2\pi \sqrt{-1}}{n}}\in\mathbb{C}^\times$, for concreteness.

The vertices in the digraph $D(\tau(2))$ near $\alpha=2$ are:
\[\xymatrix@=.7em{ & &  \zeta_9\sqrt[9]{2} \ar[rrdd] & &  \zeta_9^4\sqrt[9]{2} \ar[dd] & &  \zeta_9^7\sqrt[9]{2}\ar[ddll] \\ \\ 
\sqrt[9]{2} \ar[ddrr] & &  & &  \sqrt[3]{2}\ar[dd]  & &  & &  \zeta_9^2\sqrt[9]{2}\ar[ddll] \\ \\ 
\zeta_3\sqrt[9]{2}\ar[rr] & &  \sqrt[3]{2} \ar[rr]& &  2 \ar[dd]& &  \zeta_3^2\sqrt[3]{2} \ar[ll]& &  \zeta_9^5\sqrt[9]{2} \ar[ll]\\ \\ 
\zeta_3^2\sqrt[9]{2} \ar[uurr] & &  & &  8 & &  & &  \zeta_9^8\sqrt[9]{2}\ar[uull]}
\]

For $\alpha=\zeta_4$, we have $\mathcal{C}(\tau(\zeta_4))=\left\{\zeta_4,\zeta_4^3\right\}$, so the cycle length $e(\tau(\zeta_4))=2$. The vertices in the digraph $D(\tau(\zeta_4))$ near $\mathcal{C}(\tau(\zeta_4))$ are:

\[\xymatrix@=.7em{ \zeta_{36} \ar[ddrr] & & \zeta_{36}^{13} \ar[dd] & & \zeta_{36}^{25} \ar[ddll]& & \zeta_{36}^7 \ar[ddrr]& & \zeta_{36}^{19} \ar[dd]& & \zeta_{36}^{31} \ar[ddll] \\ \\ 
& & \zeta_{12} \ar[ddrr]& & & & & & \zeta_{12}^7\ar[ddll] \\ \\ 
& & & & \zeta_4 \ar@/^2pc/[rr] & & \zeta_4^3 \ar@/^2pc/[ll] \\ \\ 
& & \zeta_{12}^5 \ar[uurr]& & & & & & \zeta_{12}^{11} \ar[uull]\\ \\ 
\zeta_{36}^5 \ar[uurr]& & \zeta_{36}^{17} \ar[uu]& & \zeta_{36}^{29} \ar[uull]& & \zeta_{36}^{11} \ar[uurr]& & \zeta_{36}^{23} \ar[uu]& & \zeta_{36}^{35}\ar[uull]
}
\]
\end{eg}

\begin{rem}\label{rem:cycle-facts} Let us collect some immediate observations about Mahler cycles that we shall use, and refer to, throughout the sequel.

For a Mahler tree $\tau\in\mathcal{T}_M$ it follows from the Definition~\ref{defn:trees} that either $\tau\subset\mathbb{K}^\times_t$ or else $\tau\cap\mathbb{K}^\times_t=\emptyset$. In particular, $\mathcal{C}(\tau)=\emptyset\Leftrightarrow e(\tau)=0$, which occurs precisely when $\tau\not\subset\mathbb{K}^\times_t$ (the \emph{non-torsion case}).

On the other hand, $\mathbb{K}^\times_t$ consists of the pre-periodic points for the action of the monoid $\mathcal{P}$ on $\mathbb{K}^\times$ given by $\alpha\mapsto \alpha^{p^n}$ for $n\in\mathbb{Z}_{\geq 0}$. For $\tau\subset\mathbb{K}^\times_t$ (the \emph{torsion case}), the Mahler cycle $\mathcal{C}(\tau)$ is a non-empty set endowed with a simply transitive action of the quotient monoid $\mathcal{P}/\mathcal{P}^e\simeq\mathbb{Z}/e\mathbb{Z}$, where $\mathcal{P}^e:=\{p^{ne} \ | \ n\in\mathbb{Z}\}$, and $e:=e(\tau)$. We emphasize that in general $\mathcal{C}(\tau)$ is only a set, and not a group. The Mahler tree $\tau(1)$ consists precisely of the roots of unity $\zeta\in\mathbb{K}^\times_t$ whose order $r$ is such that $\mathrm{gcd}(r,p^n)=r$ for some $p^n\in\mathcal{P}$, or equivalently such that every prime factor of $r$ divides $p$. When $\tau\subset\mathbb{K}^\times_t$ but $\tau\neq \tau(1)$, the cycle length $e(\tau)$ coincides with the order of $p$ in the group of units $(\mathbb{Z}/r\mathbb{Z})^\times$, where $r>1$ is the common order of the roots of unity $\gamma\in\mathcal{C}(\tau)$, and for any given $\gamma\in\mathcal{C}(\tau)$ we have that $\mathcal{C}(\tau)=\{\gamma^{p^\ell} \ | \ 0\leq \ell \leq e-1\}$.
\end{rem}

%----------------------------------------------

\subsection{Mahler supports and singular supports}\label{sec:forest-support}

Mahler trees allow us to define the following bespoke variants of the singular support $\mathrm{sing}(f)$ of a rational function $f$ (i.e., its set of poles), which are particularly well-suited to the Mahler context.

\begin{defn} \label{defn:supp} For $f\in \mathbb{K}(x)$, we define $\mathrm{supp}(f)\subset \mathcal{T}_M\cup\{\infty\}$, called the \emph{Mahler support} of $f$, as follows:
\begin{itemize}
\item $\infty\in\mathrm{supp}(f)$ if and only if $f_L\neq 0$; and
\item for $\tau\in\mathcal{T}_M$, $\tau\in\mathrm{supp}(f)$ if and only if $\tau$ contains a pole of $f$.
\end{itemize}

For $\tau\in\mathcal{T}_M$, the \emph{singular support} of $f$ in $\tau$, denoted by $\mathrm{sing}(f,\tau)$, is the (possibly empty) set of poles of $f$ contained in $\tau$.
\end{defn}

We omit the straightforward proof of the following lemma.

\begin{lem}\label{lem:stable-support} For $f,g\in\mathbb{K}(x)$ and $0\neq c\in\mathbb{K}$ we have the following:
\begin{enumerate}
\item $\mathrm{supp}(f)=\emptyset\Leftrightarrow f=0$;
\item $\mathrm{supp}(\sigma(f))=\mathrm{supp}(f)=\mathrm{supp}(c\cdot f)$; and
\item $\mathrm{supp}(f+g)\subseteq \mathrm{supp}(f)\cup\mathrm{supp}(g)$.
\end{enumerate}
\end{lem}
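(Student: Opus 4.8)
The plan is to treat the three assertions in turn, in each case reducing to the canonical decomposition $f=f_L+f_T$ of \eqref{eq:f-decomposition} and to two elementary facts: first, the only possible pole of $f$ outside $\mathbb{K}^\times$ is $\infty$, since $f_T=a/b$ with $\mathrm{gcd}(x,b)=1$ rules out a pole at $0$; and second, since $\mathbb{K}$ is algebraically closed, each pole $\beta\in\mathbb{K}^\times$ of $f$ lies in exactly one Mahler tree $\tau(\beta)\in\mathcal{T}_M$, and $\beta$ admits $p$-th roots in $\mathbb{K}^\times$.

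For part~(1): if $\mathrm{supp}(f)=\emptyset$, then $\infty\notin\mathrm{supp}(f)$ forces $f_L=0$, while the absence of any $\tau\in\mathcal{T}_M$ containing a pole of $f$ forces $f_T$ to have no poles in $\mathbb{K}^\times$; combined with the normalization of $f_T=a/b$ (no pole at $0$, and $\mathrm{deg}(a)<\mathrm{deg}(b)$), this forces $b$ to be constant and hence $a=0$, so $f=f_T=0$. The reverse implication is immediate from the definition of $\mathrm{supp}$. For part~(2), multiplication by $0\neq c\in\mathbb{K}$ alters neither the pole set of $f$ nor the vanishing of $f_L$, so $\mathrm{supp}(cf)=\mathrm{supp}(f)$ is immediate; for $\sigma$, I would first invoke Lemma~\ref{lem:rational-decomposition} to write $\sigma(f)_L=\sigma(f_L)$ and $\sigma(f)_T=\sigma(f_T)$, note that $\sigma(f_L)=0\Leftrightarrow f_L=0$ since $\sigma$ is an injective $\mathbb{K}$-algebra endomorphism (settling the $\infty$-component), and then observe that $\alpha\in\mathbb{K}^\times$ is a pole of $\sigma(f)=f(x^p)$ if and only if $\alpha^p$ is a pole of $f$; since every pole $\beta\in\mathbb{K}^\times$ of $f$ has $p$-th roots and $\tau(\alpha)=\tau(\alpha^p)$ straight from Definition~\ref{defn:trees}, the set of trees containing a pole of $\sigma(f)$ coincides with that of $f$. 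Finally for part~(3), $(f+g)_L=f_L+g_L$ and $(f+g)_T=f_T+g_T$ give $\infty\in\mathrm{supp}(f+g)\Rightarrow f_L\neq 0$ or $g_L\neq 0$, while every pole of $f+g$ in $\mathbb{K}^\times$ must already be a pole of $f$ or of $g$ (addition can cancel poles but never create them), so every $\tau$ meeting the pole set of $f+g$ lies in $\mathrm{supp}(f)\cup\mathrm{supp}(g)$.

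None of these steps presents a genuine obstacle; the only point deserving care is the $\sigma$-equivariance in part~(2), where one must route through Lemma~\ref{lem:rational-decomposition} and use the algebraic closedness of $\mathbb{K}$ to be sure that pulling poles back along $x\mapsto x^p$ loses (and adds) no Mahler tree.
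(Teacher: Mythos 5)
Your proof is correct and complete; the paper explicitly omits the proof of this lemma as straightforward, and your argument (reducing to the decomposition $f=f_L+f_T$, using the $\sigma$-stability from Lemma~\ref{lem:rational-decomposition}, the fact that the poles of $f(x^p)$ in $\mathbb{K}^\times$ are exactly the $p$-th roots of the poles of $f$, and that $\tau(\alpha)=\tau(\alpha^p)$ with $p$-th roots existing by algebraic closedness) is precisely the intended routine verification. No gaps.
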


%\begin{proof}
%(1).~$f=0$ if and only if $f_L=0$ and $f_T=0$, and $f_T=0$ if and only if $f$ has no poles in $\mathbb{K}^\times$.
%
%(2).~For $0\neq c\in \mathbb{K}$, $cf_L\neq 0$ if and only if $f_L\neq 0$, and $f$ and $cf$ have the same poles. Since $\sigma$ is injective, $\sigma(f_L)\neq 0$ if and only if $f_L\neq 0$. Finally, $\alpha\in\mathbb{K}^\times$ is a pole of $\sigma(f)$ if and only if $\alpha^p$ is a pole of $f$, so $\tau$ contains a pole of $f$ if and only if $\tau$ contains a pole \mbox{of $\sigma(f)$.}
%
%(3).~If $f_L+g_L\neq 0$ then at least one of $f_L\neq 0$ or $g_L\neq 0$. The set of poles of $f+g$ is contained in the union of the set of poles of $f$ and the set of poles of $g$, and therefore if $\tau$ contains a pole of $f+g$ then $\tau$ must contain a pole of $f$ or a pole of $g$.
%\end{proof}

\begin{defn}\label{defn:tree-projection}
For a Mahler tree $\tau\in\mathcal{T}_M$, the $\tau$\emph{-subspace} \begin{equation}\label{eq:tree-subspace}\mathbb{K}(x)_\tau := \bigl\{f_T\in \mathbb{K}(x)_T \ \big| \ \mathrm{supp}(f_T)\subseteq \left\{\tau\right\}\bigr\}.\end{equation} For $f\in\mathbb{K}(x)$, the $\tau$\emph{-component} $f_\tau$ of $f$ is the projection of the component $f_T$ of $f$ in \eqref{eq:f-decomposition} to the $\tau$-subspace $\mathbb{K}(x)_\tau$ in \eqref{eq:tree-subspace}.\end{defn}

\begin{lem}\label{lem:tree-decomposition} For $f,g\in\mathbb{K}(x)$, $f_T=\Delta(g_T)$ if and only if $f_\tau =\Delta(g_\tau )$ for every $\tau\in\mathcal{T}_M$.
\end{lem}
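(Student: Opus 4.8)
The strategy is exactly parallel to the proof of Lemma~\ref{lem:trajectory-projection}: we want to show that the $\mathbb{K}$-vector space decomposition $\mathbb{K}(x)_T \simeq \bigoplus_{\tau\in\mathcal{T}_M}\mathbb{K}(x)_\tau$ induced by the $\tau$-components of Definition~\ref{defn:tree-projection} is $\sigma$-stable, and then the ``if and only if'' follows formally by applying $\Delta=\sigma-\mathrm{id}$ componentwise. So the content reduces to two things: (i) verifying that the direct sum decomposition is genuine — every $f_T\in\mathbb{K}(x)_T$ is the finite sum $\sum_\tau f_\tau$ of its $\tau$-components, and this expression is unique; and (ii) verifying that $\sigma(\mathbb{K}(x)_\tau)\subseteq\mathbb{K}(x)_\tau$ for each Mahler tree $\tau$.

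For (i), I would argue from the partial fraction decomposition of $f_T$: since $f_T=a/b$ with $\gcd(a,b)=\gcd(x,b)=1$ and $\deg a<\deg b$, it expands uniquely as a $\mathbb{K}$-linear combination of terms $\frac{1}{(x-\alpha)^k}$ with $\alpha\in\mathbb{K}^\times$ a pole of $f_T$ and $k\geq 1$. Grouping these terms according to which Mahler tree $\tau=\tau(\alpha)$ contains the pole $\alpha$ — using that the trees partition $\mathbb{K}^\times$ (Definition~\ref{defn:trees}) and that $\mathrm{sing}(f,\tau)$ is finite — gives the finite decomposition $f_T=\sum_{\tau\in\mathrm{supp}(f)\setminus\{\infty\}}f_\tau$ with $f_\tau\in\mathbb{K}(x)_\tau$, and uniqueness is inherited from uniqueness of partial fractions (equivalently, from Lemma~\ref{lem:stable-support}(1) applied to a hypothetical nontrivial relation). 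For (ii), I would use that $\sigma$ maps the pole $\alpha$ of $\frac{1}{x-\alpha}$ to the poles $\{\beta : \beta^p=\alpha\}$ of $\sigma\!\left(\frac{1}{x-\alpha}\right)=\frac{1}{x^p-\alpha}$, and that each such $\beta$ satisfies $\beta^p=\alpha$, hence $\beta\sim\alpha$ and $\tau(\beta)=\tau(\alpha)$; more generally $\mathrm{sing}(\sigma(f_\tau),\tau')=\emptyset$ for $\tau'\neq\tau$, which is essentially Lemma~\ref{lem:stable-support}(2) refined to the level of a single tree. Thus $\sigma(f_\tau)\in\mathbb{K}(x)_T$ has support contained in $\{\tau\}$, i.e.\ $\sigma(f_\tau)\in\mathbb{K}(x)_\tau$.

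With the decomposition and its $\sigma$-stability in hand, the proof concludes: if $f_T=\Delta(g_T)$ then projecting to the $\tau$-subspace (which commutes with $\sigma$, hence with $\Delta$) gives $f_\tau=\Delta(g_\tau)$ for every $\tau$; conversely, if $f_\tau=\Delta(g_\tau)$ for all $\tau$, then only finitely many $g_\tau$ are nonzero (those $\tau\in\mathrm{supp}(f)$, by Lemma~\ref{lem:stable-support}, together with possibly finitely many $\tau$ for which $g_\tau\neq 0$ but $\Delta(g_\tau)=0$ — but $\Delta$ is injective on each nonzero $\mathbb{K}(x)_\tau$ is not needed; one just takes $g_T:=\sum_\tau g_\tau$ over the finitely many indices forced to be nonzero), and summing yields $f_T=\Delta(g_T)$.

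The only mild obstacle is the bookkeeping in (i)–(ii): one must be slightly careful that ``$f_T$ has all its poles in a single tree $\tau$'' is preserved under $\sigma$ even though $\sigma$ can dramatically increase the number of poles (a pole $\alpha$ can acquire up to $p$ preimages under $x\mapsto x^p$), and that the denominator of $\sigma(f_\tau)$ still has the form required by the $T$-component (coprime to $x$, which holds since $\alpha\neq 0\Rightarrow\beta\neq 0$). None of this is deep — it is the same mechanism as in Lemma~\ref{lem:rational-decomposition} and Lemma~\ref{lem:trajectory-projection} — which is presumably why the authors deem the proof ``straightforward'' and omit it; indeed one could simply write \emph{``This follows as in the proof of Lemma~\ref{lem:trajectory-projection}, using that the $\mathbb{K}$-decomposition $\mathbb{K}(x)_T\simeq\bigoplus_{\tau\in\mathcal{T}_M}\mathbb{K}(x)_\tau$ is $\sigma$-stable by Lemma~\ref{lem:stable-support}(2).''}
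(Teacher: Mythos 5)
Your proposal is correct and takes essentially the same route as the paper, whose entire proof is the one-line observation that the $\mathbb{K}$-linear decomposition $\mathbb{K}(x)_T\simeq\bigoplus_{\tau\in\mathcal{T}_M}\mathbb{K}(x)_\tau$ is $\sigma$-stable by Lemma~\ref{lem:stable-support}. The details you supply --- grouping the partial fraction terms by Mahler tree to get the direct sum, and noting that the poles of $\sigma\bigl(\tfrac{1}{(x-\alpha)^k}\bigr)$ are $p$-th roots of $\alpha$ and hence lie in $\tau(\alpha)$ --- are exactly what the paper leaves implicit.
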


\begin{proof}
It follows from Lemma~\ref{lem:stable-support} that the $\mathbb{K}$-linear decomposition $\mathbb{K}(x)_T\simeq\bigoplus_{\tau\in\,\mathcal{T}_M}\mathbb{K}(x)_\tau$ is $\sigma$-stable (cf.~\cite[\S5]{CHLW:2016}).
\end{proof}

%-----------------------------------------------

\subsection{Mahler dispersion}

We now define a Mahler variant of the notion of (polar) dispersion used in \cite{ChenSinger2012}, following the original definitions in \cite{Abramov:1971,Abramov:1974}.

\begin{defn} \label{defn:dispersion} For $f\in \mathbb{K}(x)$ and $\tau\in\mathrm{supp}(f)$, the \emph{Mahler dispersion} of $f$ at $\tau$, denoted by $\mathrm{disp}(f,\tau)$, is defined as follows.

If $\tau\in \mathcal{T}_M$, $\mathrm{disp}(f,\tau)$ is the largest $d\in\mathbb{Z}_{\geq 0}$ (if it exists) for which there exists $\alpha\in\mathrm{sing}(f,\tau)$ such that $\alpha^{p^d}\in\mathrm{sing}(f,\tau)$. If there is no such $d\in\mathbb{Z}_{\geq 0}$, then we set $\mathrm{disp}(f,\tau)=\infty$.

If $\tau=\infty$, let us write $f_L=\sum_{i=n}^Nc_ix^i\in\mathbb{K}[x,x^{-1}]$ with $c_nc_N\neq 0$.
\begin{itemize}
\item If $f_L=c_0\neq 0$ then we set $\mathrm{disp}(f,\infty)=0$; otherwise
\item $\mathrm{disp}(f,\infty)$ is the largest $d\in\mathbb{Z}_{\geq 0}$ for which there exists an index $i\neq 0$ such that $c_i\neq 0$ and $c_{ip^d}\neq 0$.
\end{itemize} 
For $f\in\mathbb{K}(x)$ and $\tau\in\mathcal{T}_M\cup\{\infty\}$ such that $\tau\notin\mathrm{supp}(f)$, we do not define $\mathrm{disp}(f,\tau)$ at all (cf.~\cite{Abramov:1971,Abramov:1974,ChenSinger2012}).
\end{defn}

Similarly as in the shift and $q$-difference cases (cf.~\cite[Lemma 6.3]{HardouinSinger2008} and \cite[Lemma~2.4 and Lemma~2.9]{ChenSinger2012}), Mahler dispersions will play a crucial role in what follows. As we prove in Corollary~\ref{cor:summable-dispersion}, they already provide a partial obstruction to summability: if $f\in\mathbb{K}(x)$ is Mahler summable then every Mahler dispersion of $f$ is non-zero. Moreover, Mahler dispersions also detect whether $f$ has any ``bad'' poles (i.e., at roots of unity of order coprime to $p$) according to:

\begin{lem} \label{lem:infinite-dispersion} Let $f\in\mathbb{K}(x)$ and $\tau\in\mathrm{supp}(f)$. Then $\mathrm{disp}(f,\tau)=\infty$ if and only if $\mathrm{sing}(f,\tau)\cap\mathcal{C}(\tau)\neq \emptyset$.
\end{lem}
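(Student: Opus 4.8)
The plan is to reduce immediately to the case $\tau\in\mathcal{T}_M$: when $\tau=\infty$ the Laurent polynomial $f_L$ has only finitely many terms, so $\mathrm{disp}(f,\infty)$ is finite by Definition~\ref{defn:dispersion}, and $\mathcal{C}(\infty)$ is not defined, so nothing is claimed there. So fix $\tau\in\mathcal{T}_M$ and write $S:=\mathrm{sing}(f,\tau)$, a \emph{finite} set (the poles of $f$ form a finite set), which is nonempty since $\tau\in\mathrm{supp}(f)$. Observe that $0$ always belongs to the set $D:=\{d\in\mathbb{Z}_{\geq 0}\mid \alpha^{p^d}\in S \text{ for some }\alpha\in S\}$, so by Definition~\ref{defn:dispersion} we have $\mathrm{disp}(f,\tau)=\infty$ precisely when $D$ has no largest element, equivalently when $D$ is infinite.

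For the forward implication I would argue as follows. Assume $D$ is infinite. Since $S$ is finite, a pigeonhole argument produces a single $\alpha\in S$ such that $\alpha^{p^d}\in S$ for infinitely many $d$; applying finiteness of $S$ a second time to the map $d\mapsto\alpha^{p^d}$ on this infinite index set yields $d_1<d_2$ with $\gamma:=\alpha^{p^{d_1}}=\alpha^{p^{d_2}}\in S$. Then $\gamma^{p^m}=\gamma$ with $m:=d_2-d_1\geq 1$, so $\gamma^{p^m-1}=1$; since $\gcd(p,p^m-1)=1$, the order of $\gamma$ divides $p^m-1$ and is therefore coprime to $p$, so $\gamma\in\mathcal{C}(\tau)$ by Definition~\ref{defn:cycles}. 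As $\gamma\in S$ as well, this gives $\mathrm{sing}(f,\tau)\cap\mathcal{C}(\tau)\neq\emptyset$.

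For the reverse implication, suppose $\gamma\in\mathrm{sing}(f,\tau)\cap\mathcal{C}(\tau)$. Then $\gamma\in\mathbb{K}^\times_t$, so $\tau$ is torsion and $e:=e(\tau)=|\mathcal{C}(\tau)|\geq 1$. By Remark~\ref{rem:cycle-facts} the quotient monoid $\mathcal{P}/\mathcal{P}^e\simeq\mathbb{Z}/e\mathbb{Z}$ acts simply transitively on $\mathcal{C}(\tau)$, whence $\gamma^{p^e}=\gamma$ and so $\gamma^{p^{ne}}=\gamma\in\mathrm{sing}(f,\tau)$ for every $n\in\mathbb{Z}_{\geq 0}$. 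Thus $ne\in D$ for all $n$, $D$ has no largest element, and $\mathrm{disp}(f,\tau)=\infty$.

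The argument is elementary, and I expect the only delicate point to be the forward direction: one must convert the purely quantitative fact that the dispersions are unbounded into the genuine periodicity $\gamma^{p^m}=\gamma$. This is exactly where the finiteness of $\mathrm{sing}(f,\tau)$ gets used twice, and where the coprimality $\gcd(p,p^m-1)=1$ is what forces the periodic pole $\gamma$ to lie on the Mahler cycle $\mathcal{C}(\tau)$ rather than merely somewhere in $\tau$.
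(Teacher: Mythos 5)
Your proof is correct and follows essentially the same route as the paper: a double application of the finiteness of $\mathrm{sing}(f,\tau)$ to extract a periodic pole $\gamma$ with $\gamma^{p^m}=\gamma$ in the forward direction, and the relation $\gamma^{p^{e(\tau)}}=\gamma$ from Remark~\ref{rem:cycle-facts} in the reverse. The only (harmless) variation is that you deduce coprimality of the order of $\gamma$ to $p$ directly from $\gamma^{p^m-1}=1$, whereas the paper computes the order of $\alpha^{p^d}$ as $r/\gcd(p^d,r)$ for large $d$; both are fine.
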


\begin{proof}($\Rightarrow$). If $\mathrm{disp}(f,\tau)=\infty$, then there exist $\alpha, \gamma \in \mathrm{sing}(f,\tau)$ such that $\alpha^{p^d}=\gamma$ for infinitely many $d\in\mathbb{Z}_{\geq 0}$. Thus, both $\gamma$ and $\alpha$ are roots of unity. Let $r$ be the order of $\alpha$. For any $d\in\mathbb{Z}_{\geq 0}$, the order of $\alpha^{p^d}$ is $r_d:=r/\mathrm{gcd}(p^d,r)$, and we see that for every sufficiently large $d$, the order $r_d$ of $\alpha^{p^d}$ is coprime to $p$, and therefore $\gamma$ is a root of unity of order coprime to $p$.

\noindent ($\Leftarrow$).~For $\gamma\in \mathrm{sing}(f,\tau)\cap\mathcal{C}(\tau)$ we have $\gamma^{p^{e(\tau)\cdot n}  }=\gamma$ for every $n\in\mathbb{Z}_{\geq 0}$ (cf.~Remark~\ref{rem:cycle-facts}), whence $\mathrm{disp}(f,\tau)=\infty$ by Definition~\ref{defn:dispersion}.
\end{proof}

%-------------------------------------------------------

\subsection{Mahler coefficients for partial fractions}\label{sec:mahler-coefficients}

We now study the effect of the Mahler operator $\sigma$ on partial fraction decompositions. For $\alpha\in\mathbb{K}^\times$, $m\in\mathbb{N}$, and $1\leq k \leq m$, we define the \emph{Mahler coefficients} $V^m_k(\alpha)\in\mathbb{K}$ implicitly by \begin{equation}\label{eq:mahler-coefficients} \sigma\left(\frac{1}{(x-\alpha^p)^m}\right)=\frac{1}{(x^p-\alpha^p)^m}=\sum_{k=1}^m\sum_{i=0}^{p-1}\frac{V^m_k(\zeta_p^i\alpha)}{(x-\zeta_p^i\alpha)^k}.\end{equation} These coefficients are computed explicitly with the following result.

\begin{lem}\label{lem:mahler-coefficients} There exist \emph{universal coefficients} $\mathbb{V}^m_k\in\mathbb{Q}$ such that \[V^m_k(\alpha)=\mathbb{V}^m_k\cdot\alpha^{k-pm}\] for every $\alpha\in\mathbb{K}^\times$ and $1\leq k\leq m$. Moreover, these $\mathbb{V}^m_k$ are the first $m$ Taylor coefficients at $x=1$ of \begin{equation}\label{eq:taylor-coefficients}(x^{p-1}+\dots+x+1)^{-m}=\sum_{k=1}^m\mathbb{V}^m_k\cdot(x-1)^{m-k}+O((x-1)^m).\end{equation}\end{lem}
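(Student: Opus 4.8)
The plan is to extract the claimed formulas directly from the defining identity \eqref{eq:mahler-coefficients} by a substitution that reduces everything to the single case $\alpha=1$. First I would rewrite the left-hand side of \eqref{eq:mahler-coefficients} by factoring $x^p-\alpha^p=\prod_{i=0}^{p-1}(x-\zeta_p^i\alpha)$, so that
\[\frac{1}{(x^p-\alpha^p)^m}=\prod_{i=0}^{p-1}\frac{1}{(x-\zeta_p^i\alpha)^m}.\]
The partial fraction expansion of this product has a pole of order exactly $m$ at each $x=\zeta_p^i\alpha$, which already justifies the shape of the right-hand side of \eqref{eq:mahler-coefficients} (no cross terms, top order $m$). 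To read off $V^m_k(\alpha)$, the coefficient of $(x-\alpha)^{-k}$ in the local expansion at $x=\alpha$, I would multiply through by $(x-\alpha)^m$ and expand in powers of $(x-\alpha)$: the ``other'' factors $\prod_{i\neq 0}(x-\zeta_p^i\alpha)^{-m}$ are holomorphic and nonvanishing at $x=\alpha$, so
\[V^m_k(\alpha)=\left.\frac{1}{(m-k)!}\left(\frac{d}{dx}\right)^{m-k}\!\left[\prod_{i=1}^{p-1}\frac{1}{(x-\zeta_p^i\alpha)^m}\right]\right|_{x=\alpha},\]
and the expansion at the other poles $\zeta_p^i\alpha$ is obtained from this one by replacing $\alpha$ with $\zeta_p^i\alpha$ throughout, consistent with \eqref{eq:mahler-coefficients}.

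The key step is the homogeneity/substitution argument. Set $x=\alpha t$ in the identity above; then $x-\zeta_p^i\alpha=\alpha(t-\zeta_p^i)$, and
\[\prod_{i=1}^{p-1}\frac{1}{(x-\zeta_p^i\alpha)^m}=\alpha^{-(p-1)m}\prod_{i=1}^{p-1}\frac{1}{(t-\zeta_p^i)^m}=\alpha^{-(p-1)m}\left(\frac{t-1}{t^p-1}\right)^m=\alpha^{-(p-1)m}\bigl(t^{p-1}+\dots+t+1\bigr)^{-m},\]
using $\prod_{i=1}^{p-1}(t-\zeta_p^i)=(t^p-1)/(t-1)=t^{p-1}+\dots+1$. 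Now $\frac{d}{dx}=\alpha^{-1}\frac{d}{dt}$ and $x=\alpha$ corresponds to $t=1$, so applying $\frac{1}{(m-k)!}(\frac{d}{dx})^{m-k}$ and evaluating pulls out a factor $\alpha^{-(m-k)}$, giving
\[V^m_k(\alpha)=\alpha^{-(p-1)m-(m-k)}\cdot\frac{1}{(m-k)!}\left(\frac{d}{dt}\right)^{m-k}\!\bigl[(t^{p-1}+\dots+1)^{-m}\bigr]\Big|_{t=1}=\alpha^{k-pm}\cdot\mathbb{V}^m_k,\]
where $\mathbb{V}^m_k$ is precisely the $(m-k)$-th Taylor coefficient of $(t^{p-1}+\dots+t+1)^{-m}$ at $t=1$, matching \eqref{eq:taylor-coefficients}. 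Since $t^{p-1}+\dots+t+1$ has integer coefficients and value $p\neq 0$ at $t=1$, its $(-m)$-th power has a Taylor expansion at $t=1$ with rational coefficients, so $\mathbb{V}^m_k\in\mathbb{Q}$; and these coefficients depend only on $p,m,k$, not on $\alpha$ or on $\mathbb{K}$, which is the ``universal'' claim.

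I do not expect a genuine obstacle here — the argument is essentially a bookkeeping exercise once the right substitution $x=\alpha t$ is in place. The one point requiring a little care is making the identification of $V^m_k(\alpha)$ with a residue/Taylor coefficient rigorous purely algebraically (the paper works over an abstract algebraically closed field $\mathbb{K}$ of characteristic zero, not over $\mathbb{C}$), so rather than invoking complex analysis I would phrase the extraction of $V^m_k(\alpha)$ as: multiply \eqref{eq:mahler-coefficients} by $(x-\alpha)^m$, regard both sides as elements of the local ring $\mathbb{K}[[x-\alpha]]$ after clearing the finitely many other poles, and compare coefficients of $(x-\alpha)^{m-k}$ — equivalently, expand the rational function $(t^{p-1}+\dots+t+1)^{-m}$ as a formal power series in $(t-1)$, which is legitimate since its constant term $p$ is invertible in $\mathbb{K}$. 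This makes the "first $m$ Taylor coefficients" statement precise and finishes the proof.
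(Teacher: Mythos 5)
Your proof is correct and follows essentially the same route as the paper: the substitution $x=\alpha t$ to reduce to $\alpha=1$, and the identification of the universal coefficients as the Taylor coefficients of $(t^{p-1}+\dots+t+1)^{-m}$ at $t=1$. The only cosmetic difference is that the paper first deduces $V^m_k(\alpha)=V^m_k(1)\alpha^{k-pm}$ from uniqueness of partial fractions and then cites Bronstein for the Taylor-coefficient formula, whereas you derive that formula directly and perform the substitution inside it (also making the formal-power-series justification over abstract $\mathbb{K}$ explicit, which is a nice touch).
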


\begin{proof} We claim that $V^m_k(\alpha)=V^m_k(1)\cdot\alpha^{k-pm}$ for every $\alpha\in\mathbb{K}^\times$. To see this, set $x=\alpha y$ for a new indeterminate $y$, and note that \begin{multline*}\sum_{k=1}^m\sum_{i=0}^{p-1}\frac{V^m_k(\zeta_p^i\alpha)}{(x-\zeta_p^i\alpha)^k}=\frac{1}{(x^p-\alpha^p)^m}=\alpha^{-pm}\cdot\frac{1}{(y^p-1)^m} = \\ =\alpha^{-pm}\sum_{k=1}^m\sum_{i=0}^{p-1}\frac{V^m_k(\zeta_p^i)}{(y-\zeta_p^i)^k} =\alpha^{-pm}\sum_{k=1}^m\sum_{i=0}^{p-1}\frac{V^m_k(\zeta_p^i)\alpha^k}{(x-\zeta_p^i\alpha)^k}.\end{multline*} It follows that $V^m_k(\zeta_p^i\alpha)=V^m_k(\zeta_p^i)\alpha^{k-pm}$ for $i=0,\dots,p-1$. In particular for $i=0$ we obtain $V^m_k(\alpha)=V^m_k(1)\alpha^{k-pm}$, as claimed. Setting $\mathbb{V}^m_k:=V^m_k(1)$, we see from \eqref{eq:mahler-coefficients} that $\mathbb{V}^m_k$ is the usual continuous residue of order $k$ of $f(x):=(x^p-1)^{-m}$. The formula~\eqref{eq:taylor-coefficients} follows from \cite[Section~2]{Bronstein:1991}, where it is shown that $\mathbb{V}^m_k=\frac{g^{(m-k)}(1)}{(m-k)!}$, where $g(x):=(x-1)^m f(x)=(x^{p-1}+\dots+1)^{-m}$.\end{proof}

The following immediate consequence of Lemma~\ref{lem:mahler-coefficients} is obtained by evaluating \eqref{eq:taylor-coefficients} at $x=1$.

\begin{cor}\label{cor:top-coefficient} For $\alpha\in\mathbb{K}^\times$, $V^m_m(\alpha)=p^{-m}\alpha^{m-pm}$.\end{cor}

\begin{rem}\label{rem:mahler-phenomena}We see in \eqref{eq:mahler-coefficients} two phenomena that arise in the Mahler context and have no counterpart in the shift and $q$-dilation settings considered in \cite{ChenSinger2012} --- the main inspiration for the present work. Let $f\in\mathbb{K}(x)$ such that $0\neq f_T$ as in \eqref{eq:f-decomposition}. Then: 
\begin{enumerate}
\item the number of poles of $\sigma(f_T)$ (counted either with or without multiplicity!) is strictly larger than that of $f_T$; and
\item the (classical/continuous) higher-order residues of $f_T$ ``leak'' into the lower-order residues of $\sigma(f_T)$.
\end{enumerate}
These two phenomena are mainly responsible for our need to create new and somewhat intricate bookkeeping devices in the Mahler setting, which were (invisibly) not necessary in the shift and $q$-dilation settings considered in \cite{ChenSinger2012}, in order to develop our proposed analogous theory of Mahler discrete residues. \end{rem}

%%\section{Mahler discrete residues}\label{sec:residues}
%%
%%In this section we define the Mahler discrete residues of $f(x)\in\mathbb{K}(x)$ and prove the Corollary~\ref{cor:summable-dispersion}: if $f$ is Mahler summable then it has non-zero dispersion everywhere. This Corollary~\ref{cor:summable-dispersion} is an essential ingredient in our proofs of our Main~Theorem in Section~\ref{sec:proof}, which states that Mahler discrete residues comprise a complete obstruction to Mahler summability. We begin by studying the effect of the Mahler operator $\sigma$ on partial fraction decompositions, which we express explicitly in terms of \emph{Mahler coefficients}. We then construct Mahler residues at infinity and at Mahler trees $[\alpha]$, first for $\alpha\notin\mathbb{K}^\times_t$, and then finally for $\alpha\in\mathbb{K}^\times_t$, in increasing order of complexity.

\begin{eg}\label{eg:coefficients} Let us illustrate the definition of Mahler coefficients with $p=3$, $m=2$, and $\alpha^3=1$. Then \eqref{eq:mahler-coefficients} becomes \[\sigma\left(\frac{1}{(x-1)^{2}}\right)=\frac{1}{(x^3-1)^{2}}=\sum_{k=1}^2\sum_{i=0}^2\frac{\mathbb{V}^2_k\cdot\zeta_3^{ki}}{(x-\zeta_3^i)^k},\] since, according to Lemma~\ref{lem:mahler-coefficients}, $V^2_k(\zeta_3^i)=\mathbb{V}^2_k\cdot(\zeta_3^i)^{k-6}=\mathbb{V}^2_k\cdot\zeta_3^{ki}$ for $k=1,2$. We find in this case, using \eqref{eq:taylor-coefficients} in Lemma~\ref{lem:mahler-coefficients}, that \[\mathbb{V}^2_2 =\bigl(x^2+x+1\bigr)^{-2}\Big|_{x=1}  =\frac{1}{9}; \ \ \text{and} \ \ 
\mathbb{V}^2_1 =\bigl((x^2+x+1)^{-2}\bigr)'\Big|_{x=1}  =-\frac{2}{9}.
\]
One can verify using a computer algebra system (or by hand!) that the partial fraction decomposition of $9\cdot (x^3-1)^{-2}$ is indeed  \[ \frac{1}{(x-1)^2}+\frac{\zeta_3^2}{(x-\zeta_3)^2}+\frac{\zeta_3}{(x-\zeta_3^2)^2}+\frac{-2}{x-1}+\frac{-2\zeta_3}{x-\zeta_3}+\frac{-2\zeta_3^2}{x-\zeta_3^2}.\]
\end{eg}

%-------------------------------------------------------

\section{Mahler dispersion and summability}\label{sec:summable-dispersion}

The goal of this section is to prove Corollary~\ref{cor:summable-dispersion}: if $f\in\mathbb{K}(x)$ is Mahler summable then $\mathrm{disp}(f,\tau)\neq 0$ for every $\tau\in\mathrm{supp}(f)$. This is an essential ingredient in our proofs, following \cite{ChenSinger2012}. The following result is a Mahler analogue of \cite[Lemma 2.6]{CFMS:2021}. 

\begin{prop}\label{prop:summable-dispersion} Let $f,g\in\mathbb{K}(x)$ such that $f=\Delta(g)$. Then $\mathrm{supp}(f)\subseteq\mathrm{supp}(g)$. Moreover, $\mathrm{disp}(f,\tau)=\mathrm{disp}(g,\tau)+1$ for every $\tau\in\mathrm{supp}(f)$,
with the convention that $\infty+1=\infty$.
\end{prop}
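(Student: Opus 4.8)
The plan is to treat the two pieces of the decomposition \eqref{eq:f-decomposition} separately, using Lemma~\ref{lem:rational-decomposition}, Lemma~\ref{lem:trajectory-projection}, and Lemma~\ref{lem:tree-decomposition} to reduce to the case where $f$ (hence $g$) is supported on a single $\theta\in\mathbb{Z}/\mathcal{P}$ or a single $\tau\in\mathcal{T}_M$; then the support containment $\mathrm{supp}(f)\subseteq\mathrm{supp}(g)$ becomes the statement that if $g_\theta=0$ then $\Delta(g_\theta)=0$ (trivially), and likewise for $\tau$, so the content is really the dispersion identity. For the inclusion $\mathrm{supp}(f)\subseteq\mathrm{supp}(g)$ more directly: if $\tau\notin\mathrm{supp}(g)$ then $g_\tau=0$, so $f_\tau=\Delta(g_\tau)=0$, i.e.\ $\tau\notin\mathrm{supp}(f)$; and the same argument works verbatim with $\tau$ replaced by $\infty$ using the component $g_L$. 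So that part is essentially formal once the projections are in place.

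For the dispersion identity at a finite tree $\tau\in\mathrm{supp}(f)$, I would work with $f_\tau$ and $g_\tau$ and compare singular supports. First I would record how $\sigma$ moves poles: by \eqref{eq:mahler-coefficients} (and Corollary~\ref{cor:top-coefficient}, which guarantees no cancellation of the top-order coefficient), a pole of $g_\tau$ at $\beta$ produces poles of $\sigma(g_\tau)$ at exactly the $p$-th roots $\zeta_p^i\beta$ of $\beta$, each of the same multiplicity; equivalently, $\mathrm{sing}(\sigma(g),\tau)=\{\beta\in\tau \mid \beta^p\in\mathrm{sing}(g,\tau)\}$. Then $\mathrm{sing}(f,\tau)=\mathrm{sing}(\Delta(g),\tau)\subseteq\mathrm{sing}(\sigma(g),\tau)\cup\mathrm{sing}(g,\tau)$, and — crucially — the top-order coefficient at any $\beta$ with $\beta^p\notin\mathrm{sing}(g,\tau)$ cannot be cancelled (it comes only from the $\sigma(g)$ term via Corollary~\ref{cor:top-coefficient}), and similarly at any $\beta$ with $\beta^p\in\mathrm{sing}(g,\tau)$ but $\beta\notin\mathrm{sing}(g,\tau)$. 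The upshot I want is: $\alpha\in\mathrm{sing}(g,\tau)$ forces (looking at $\sigma(g)$ and the fact that the $-g$ part alone cannot cancel the images) that the $p$-th roots of $\alpha$ lie in $\mathrm{sing}(f,\tau)$, and conversely every element of $\mathrm{sing}(f,\tau)$ has its $p$-th power in $\mathrm{sing}(g,\tau)$. Granting this, if $\mathrm{disp}(g,\tau)=d<\infty$, witnessed by $\alpha,\alpha^{p^d}\in\mathrm{sing}(g,\tau)$, then a $p$-th root $\beta$ of $\alpha$ and the element $\beta^{p^{d+1}}=\alpha^{p^d}\cdot(\text{root of unity})$... here I must be slightly careful: I should instead take $\beta$ a $p$-th root of $\alpha$ and note $\beta^{p^{d+1}} = \alpha^{p^d}$ is in $\mathrm{sing}(g,\tau)\subseteq$ (image under the root-taking) — cleaner is: pick $\alpha \in \mathrm{sing}(g,\tau)$ with $\alpha^{p^d}\in\mathrm{sing}(g,\tau)$; then $\alpha$ is a $p$-th power, say $\alpha=\beta^p$ is not automatic, so instead observe $\mathrm{sing}(f,\tau) \ni$ each $p$-th root of $\alpha$ and each $p$-th root of $\alpha^{p^d}$, and the $p^{d}$-th power of a fixed $p$-th root $\beta$ of $\alpha$ equals $\alpha^{p^{d-1}}$... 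I would organize this by showing directly: $\mathrm{disp}(f,\tau) = 1 + \sup\{d : \exists\,\alpha\in\mathrm{sing}(g,\tau),\ \alpha^{p^d}\in\mathrm{sing}(g,\tau)\}$ by matching witnesses on both sides via the root/power correspondence, with the $+1$ coming from the extra ``layer'' of $p$-th roots introduced by $\sigma$.

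The case $\tau=\infty$ is analogous but combinatorial rather than involving residues: $\sigma$ sends the monomial $c_i x^i$ to $c_i x^{ip}$, so $\sigma(g_L)$ has support $p\cdot\mathrm{supp}(g_L)$ inside $\mathbb{Z}$, and $f_L = \sigma(g_L) - g_L$ has support contained in $p\cdot\mathrm{supp}(g_L)\cup\mathrm{supp}(g_L)$, with the ``outermost'' indices (those not in $\mathrm{supp}(g_L)$, e.g.\ $ip$ for $i$ of maximal absolute value) surviving; the dispersion bookkeeping in terms of indices $i$ and $ip^d$ then runs exactly as in the finite-tree case, and the degenerate sub-case $f_L=c_0\neq 0$ forces $g_L = c_0\,\mathbb{Z}_{\geq 1}$-geometric-type... actually $g_L$ with $\Delta(g_L)=c_0$ means $g_L$ must be a constant plus something, giving $\mathrm{disp}(g,\infty)=0$ as well, consistent with $\infty$-convention handling; I would just check the boundary cases against Definition~\ref{defn:dispersion} directly.

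\textbf{Main obstacle.} The delicate point is the no-cancellation argument: I must verify that when forming $\Delta(g) = \sigma(g)-g$, the new poles created by $\sigma$ (the $p$-th roots that were not already poles of $g$) genuinely appear in $f$, i.e.\ the leading (top-order) coefficients produced via \eqref{eq:mahler-coefficients} are not accidentally killed. This is exactly where Corollary~\ref{cor:top-coefficient} ($V^m_m(\alpha)=p^{-m}\alpha^{m-pm}\neq 0$) does the work: at a pole $\beta$ of $\sigma(g)$ of multiplicity $m$ arising from a pole $\beta^p$ of $g$ of multiplicity $m$, the order-$m$ coefficient of $\sigma(g)$ at $\beta$ is nonzero, and if $\beta$ is not a pole of $g$ (or has strictly smaller multiplicity in $g$) this coefficient cannot be cancelled by the $-g$ term. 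One also needs the mild observation that the poles of $g$ in $\tau$ at maximal ``Mahler depth'' (i.e.\ $\alpha\in\mathrm{sing}(g,\tau)$ with $\alpha^p\notin\mathrm{sing}(g,\tau)$ — such $\alpha$ exist in the non-torsion case, and in the torsion case one argues on the finitely many poles using that $g\neq 0$ in that component) produce uncancelled poles of $f$, which is what forces $\mathrm{supp}(f)$ to ``see'' all of $\mathrm{supp}(g)$ and makes the dispersion grow by exactly one rather than possibly more. I expect the torsion case (poles on or near a Mahler cycle) to require the most care, since there the root-taking correspondence is periodic and the clean ``maximal depth'' element need not exist; there I would instead count multiplicities around the cycle and again invoke Corollary~\ref{cor:top-coefficient}.
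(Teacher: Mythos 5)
Your overall strategy is the same as the paper's: reduce via the decomposition \eqref{eq:f-decomposition} and the $\theta$- and $\tau$-projections, get the support containment formally, and prove the dispersion identity by tracking poles under $\sigma$ using \eqref{eq:mahler-coefficients} together with the non-vanishing of $V^m_m$ from Corollary~\ref{cor:top-coefficient}. The two no-cancellation facts in your ``Main obstacle'' paragraph are correct and are exactly what is needed. However, the ``upshot'' you aim for in the middle is false in both directions, and building the witness-matching on it would give the wrong answer. A pole $\alpha$ of $g$ need not contribute its $p$-th roots to $\mathrm{sing}(f,\tau)$: take $g=h+\sigma(h)$ with $h=(x-\alpha_0)^{-1}$ and $\alpha_0$ non-torsion; then $f=\sigma^2(h)-h$, and the $p$-th roots of the pole $\alpha_0$ of $g$ are poles of both $\sigma(g)$ and $g$ that cancel exactly in $f$. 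Conversely, a pole of $f$ need not have its $p$-th power in $\mathrm{sing}(g,\tau)$: with $g=(x-\alpha)^{-1}$, $\alpha$ itself is a pole of $f$ coming from the $-g$ term, yet $\alpha^p\notin\mathrm{sing}(g,\tau)$; under your proposed characterization one would compute $\mathrm{disp}(f,\tau)=0$ here instead of the correct value $1$. The repair is to use only extremal witnesses: with $d=\mathrm{disp}(g,\tau)$ attained at $\alpha,\alpha^{p^d}$, any $p$-th root $\gamma$ of $\alpha$ is a pole of $\sigma(g)$ but not of $g$ (maximality of $d$), and $\gamma^{p^{d+1}}=\alpha^{p^d}$ is a pole of $g$ but not of $\sigma(g)$ (maximality again), so both survive in $f$ and $\mathrm{disp}(f,\tau)\geq d+1$; the reverse inequality still needs a short separate casework that your sketch does not address.

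The genuine gap is the case $\mathrm{disp}(g,\tau)=\infty$, i.e.\ (by Lemma~\ref{lem:infinite-dispersion}) when $g$ has a pole on the Mahler cycle $\mathcal{C}(\tau)$; there you must show $f$ again has a pole on $\mathcal{C}(\tau)$, and ``counting multiplicities around the cycle'' does not accomplish this, because every pole on the cycle is simultaneously a pole of $g$ and of $\sigma(g)$ of the same maximal order $m$, so cancellation is a priori possible at every point of the cycle at once. The missing argument is: writing $d(m,\ell)$ for the order-$m$ coefficients of $g$ at $\gamma^{p^\ell}\in\mathcal{C}(\tau)$, the order-$m$ coefficient of $f$ at $\gamma^{p^\ell}$ is $V^m_m(\gamma^{p^\ell})\,d(m,\ell+1)-d(m,\ell)$ (poles off the cycle cannot contribute there); if these all vanished, multiplying the relations around the cycle and using Corollary~\ref{cor:top-coefficient} gives $d(m,\ell)=\prod_{j=0}^{e-1}V^m_m(\gamma^{p^j})\cdot d(m,\ell)=p^{-em}\,d(m,\ell)$, forcing every $d(m,\ell)=0$, a contradiction. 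You name the right corollary but not this product-around-the-cycle step, which is the one genuinely Mahler-specific idea in the proof. Finally, a small slip at $\tau=\infty$: the sub-case $f_L=c_0\neq 0$ is vacuous when $f=\Delta(g)$, since $f_{\{0\}}=\Delta(g_{\{0\}})=0$ for the constant component; it is not a case where $\mathrm{disp}(g,\infty)=0$.
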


\begin{proof} By Lemma~\ref{lem:stable-support}, $\tau\in\mathrm{supp}(f) \Rightarrow \tau\in\mathrm{supp}(g)$. We consider separately the two main cases: (1)~$\tau=\infty$; and (2)~$\tau\in\mathcal{T}_M$.

(1).~For $f_L,g_L\in\mathbb{K}[x,x^{-1}]$ as in \eqref{eq:f-decomposition}, we have $0\neq f_L=\Delta(g_L)$, since $\infty\in\mathrm{supp}(f)$, and in particular $g_L\notin\mathbb{K}$. Then $f_\theta=\Delta(g_\theta)$ for each $\theta\in \mathbb{Z}/\mathcal{P}$ by Lemma~\ref{lem:trajectory-projection}. Since, for $\theta=\{0\}$,  $f_{\{0\}}=\Delta(g_{\{0\}})=0$, it follows from Definition~\ref{defn:dispersion} that \[\mathrm{disp}(f,\infty)=\mathrm{max}\left\{\mathrm{disp}\left(f_\theta,\infty\right) \ \middle| \ \{0\}\neq \theta\in\mathbb{Z}/\mathcal{P}, \ f_\theta\neq 0\right\}.\] We claim $\mathrm{disp} \left(\Delta\left(g_\theta\right) , \infty\right)=\mathrm{disp} \left(g_\theta, \infty\right)+1$ for every $g_\theta\in\mathbb{K}[x,x^{-1}]_\theta$ with $\{0\}\neq\theta\in\mathbb{Z}/\mathcal{P}$, which will conclude the proof of (1). To prove the claim, let us write $g_{\theta}=\sum_{j=0}^d c_{ip^j}x^{ip^{j}},$ where we assume $c_i\neq0$ and  $c_{ip^d}\neq 0$, i.e., $\mathrm{disp}(g_\theta, \infty)=d$. Then \[ \Delta(g_\theta)=c_{ip^d}x^{ip^{d+1}}-c_ix^i+\sum_{j=1}^d(c_{ip^{j-1}}-c_{ip^j})x^{ip^j},\] from which it follows that $\mathrm{disp}(\Delta(g_\theta),\infty)=d+1$, as desired.

(2).~By Lemma~\ref{lem:tree-decomposition}, $f_\tau=\Delta(g_\tau)$ for each $\tau\in\mathcal{T}_M$, and in particular for each $\tau\in\mathrm{supp}(f)$.
We consider two subcases, depending on whether $\mathrm{disp}(g,\tau)$ is finite or not.

In the first subcase, $\mathrm{disp}(g,\tau)=:d<\infty$. Let $\alpha\in \tau$ be such that $\alpha$ and $\alpha^{p^d}$ are poles of $g$. Let $\gamma\in \tau$ such that $\gamma^p=\alpha$. Then $\gamma$ is a pole of $\sigma(g)$ but not of $g$ (by maximality of $d$), whence $\gamma$ is a pole of $f$. On the other hand, $\gamma^{p^{d+1}}=\alpha^{p^d}$ is a pole of $g$ but not of $\sigma(g)$, for if $\alpha^{p^d}$ were a pole of $\sigma(g)$ then $\alpha^{p^{d+1}}$ would be a pole of $g$, again contradicting maximality of $d$. Hence $\gamma^{p^{d+1}}$ is a pole of $f$. Thus $\mathrm{disp}(f,\tau)\geq d+1$. One can show equality by contradiction: if $\alpha\in\tau$ is a pole of $f$ such that $\alpha^{p^s}$ is also a pole of $f$ for some $s> d+1$, then each of $\alpha$ and $\alpha^{p^s}$ is either a pole of $g$ or a pole of $\sigma(g)$.
This implies (after tedious but straightforward casework) that there exist $0\leq i,j\leq 1$ such that $\alpha^{p^{s+i}}$ and $\alpha^{p^j}$ are both poles of $g$, which contradicts the maximality of $d$ since in any case $s+i-j\geq s-1>d$.
Hence $\mathrm{disp}(f,\tau)=\mathrm{disp}(g,\tau)+1$ in this first subcase.

In the last remaining subcase where $\mathrm{disp}(g,\tau)=\infty$, there exists $\gamma\in\mathrm{sing}(g,\tau)\cap\mathcal{C}(\tau)$ by Lemma~\ref{lem:infinite-dispersion}. We claim $\gamma^{p^\ell}\in\mathrm{sing}(f,\tau)$ for some $\ell\in\mathbb{Z}/e\mathbb{Z}$, where $e:=e(\tau)\geq 1$ (cf.~Remark~\ref{rem:cycle-facts}, where we discussed the meaning of $\gamma^{p^\ell}$ for $\ell\in\mathbb{Z}/e\mathbb{Z}$, rather than $\ell\in\mathbb{Z}_{\geq 0}$). This will imply that $\mathrm{disp}(f,\tau)=\infty=\mathrm{disp}(g,\tau)+1$, by Lemma~\ref{lem:infinite-dispersion}.

Let us prove the claim. Note that the $\mathbb{K}$-subspace $S$ of $\mathbb{K}(x)_\tau$, consisting of rational functions none of whose poles belongs to $\mathcal{C}(\tau)$, or equivalently (by Lemma~\ref{lem:infinite-dispersion}), the $\mathbb{K}$\nobreakdash-span of the elements of $\mathbb{K}(x)_\tau$ having finite dispersion, is $\sigma$-stable\footnote{If a denominator $b\in\mathbb{K}[x]$ has no roots in $\mathcal{C}(\tau)$ then neither does $\sigma(b)$, for if $\gamma\in\mathcal{C}(\tau)$ were a root of $\sigma(b)$ then $\gamma^p\in\mathcal{C}(\tau)$  would be a root of $b$.}.
So we may assume \[g_\tau=\sum_{k=1}^m \sum_{\ell\in\,\mathbb{Z}/e\mathbb{Z}} \frac{d(k,\ell)}{(x-\gamma^{p^\ell})^k},\] where $d(k,\ell)\in\mathbb{K}$ such that $d(m,\ell)\neq 0$ for some $\ell\in\mathbb{Z}/e\mathbb{Z}$, without loss of generality, because the other possible poles of $g$ in $\tau$ cannot contribute to the possible poles of $f$ in $\mathcal{C}(\tau)$ (since $S$ is $\sigma$-stable). Then 
\begin{multline*}
\hspace{-1em}\sigma(g_\tau) =  \sum_{k=1}^m \sum_{\ell\in\,\mathbb{Z}/e\mathbb{Z}} \frac{d(k,\ell)}{\left(x^p-\gamma^{p^\ell}\right)^k} =   \sum_{i=0}^{p-1}\left(\sum_{\ell\in\,\mathbb{Z}/e\mathbb{Z}}   \frac{V^m_m\left(\zeta_p^i\gamma^{p^{\ell-1}}\right)\cdot d(m,\ell)}{\left(x-\zeta_p^i\gamma^{p^{\ell-1}}\right)^m}\right)\,+\hphantom{x} \\ \hspace{1em}+ (\text{lower-order terms}),\end{multline*} where the $V^m_m$ are as in \eqref{eq:mahler-coefficients}, and therefore\footnote{See Remark~\ref{rem:sigma-torsion}, where we systematically elaborate on the details of this computation.} \begin{multline}\label{eq:D-map-earlier} f_\tau =\Delta(g_\tau) = \sum_{\ell\in\,\mathbb{Z}/e\mathbb{Z}} \frac{V^m_m\left(\gamma^{p^{\ell}}\right)\cdot d(m,\ell+1)-d(m,\ell)}{\left(x-\gamma^{p^{\ell}}\right)^m}+\hphantom{x}\\+(\text{lower-order terms})+(\text{elements of } S).\end{multline} But the coefficients $V^m_m(\gamma^{p^\ell})\cdot d(m,\ell+1)-d(m,\ell)$ cannot be zero for every $\ell\in\mathbb{Z}/e\mathbb{Z}$, for otherwise the computation \[d(m,\ell)=d(m,\ell)\prod_{j=0}^{e-1}V^m_m\left(\gamma^{p^j}\right)=d(m,\ell)\prod_{j=0}^{e-1}\frac{\gamma^{mp^j}}{p^m\gamma^{mp^{j+1}}}=\frac{d(m,\ell)}{p^{em}},\] where the middle equality is obtained from Corollary~\ref{cor:top-coefficient}, would imply that $d(m,\ell)=0$ for every $\ell\in\mathbb{Z}/e\mathbb{Z}$. But this is impossible, concluding the proof of the claim that $f$ has a pole in $\mathcal{C}(\tau)$.
\end{proof}

\begin{cor} \label{cor:summable-dispersion} Suppose that $f\in \mathbb{K}(x)$ is Mahler summable. Then $\mathrm{disp}(f,\tau)\neq 0$ for every $\tau\in\mathrm{supp}(f)$.
\end{cor}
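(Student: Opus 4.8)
The plan is to derive Corollary~\ref{cor:summable-dispersion} directly from Proposition~\ref{prop:summable-dispersion}, which already does all the heavy lifting. Suppose $f \in \mathbb{K}(x)$ is Mahler summable, so $f = \Delta(g)$ for some $g \in \mathbb{K}(x)$. If $\mathrm{supp}(f) = \emptyset$ then $f = 0$ by Lemma~\ref{lem:stable-support}(1) and there is nothing to prove, since the claim quantifies over an empty set. Otherwise, fix any $\tau \in \mathrm{supp}(f)$. By Proposition~\ref{prop:summable-dispersion} we have $\tau \in \mathrm{supp}(g)$, so $\mathrm{disp}(g,\tau)$ is defined (per Definition~\ref{defn:dispersion}, the dispersion at $\tau$ is only defined when $\tau \in \mathrm{supp}(g)$), and moreover $\mathrm{disp}(f,\tau) = \mathrm{disp}(g,\tau) + 1$, with the convention $\infty + 1 = \infty$.

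It then suffices to observe that $\mathrm{disp}(g,\tau) \in \mathbb{Z}_{\geq 0} \cup \{\infty\}$, so that $\mathrm{disp}(g,\tau) + 1 \in \mathbb{Z}_{\geq 1} \cup \{\infty\}$; in either case $\mathrm{disp}(f,\tau) \geq 1 > 0$, hence $\mathrm{disp}(f,\tau) \neq 0$. Since $\tau \in \mathrm{supp}(f)$ was arbitrary, this proves the corollary.

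I do not anticipate any real obstacle here: the entire content has been front-loaded into Proposition~\ref{prop:summable-dispersion}, and this corollary is merely the observation that adding $1$ to a nonnegative integer (or to $\infty$) never yields $0$. The only point requiring a moment's care is the bookkeeping around when dispersions are defined — one must invoke the first assertion of Proposition~\ref{prop:summable-dispersion} ($\mathrm{supp}(f) \subseteq \mathrm{supp}(g)$) to guarantee that $\mathrm{disp}(g,\tau)$ makes sense before applying the additive formula — but this is immediate. The proof will be two or three sentences long.
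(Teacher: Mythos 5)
Your proof is correct and is exactly the argument the paper intends: the corollary is stated without proof as an immediate consequence of Proposition~\ref{prop:summable-dispersion}, and your two observations --- that $\mathrm{supp}(f)\subseteq\mathrm{supp}(g)$ makes $\mathrm{disp}(g,\tau)$ well-defined, and that $\mathrm{disp}(g,\tau)+1\in\mathbb{Z}_{\geq 1}\cup\{\infty\}$ can never equal $0$ --- are precisely the intended deduction. Nothing is missing.
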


%-------------------------------------------------------

\section{Mahler discrete residues}\label{sec:residues}

In this section we define the Mahler discrete residues of $f\in\mathbb{K}(x)$, in increasing order of complexity: first at infinity, and then at Mahler trees $\tau\in\mathcal{T}_M$, separately in the subcase where $\tau\not\subset\mathbb{K}^\times_t$, and finally in the subcase where $\tau\subset\mathbb{K}^\times_t$ (cf.~Remark~\ref{rem:cycle-facts}).

%------------------------------------------------

\subsection{Mahler discrete residue at infinity}\label{sec:laurent-residues}

Here we define the Mahler discrete residue of $f\in\mathbb{K}(x)$ at $\infty$ in terms of the component $f_L\in\mathbb{K}[x,x^{-1}]$ of $f$ in \eqref{eq:f-decomposition}, and show that it forms a complete obstruction to the Mahler summability of $f_L$. The proof of Proposition~\ref{prop:laurent-residues} below follows the same strategy as that of \cite[Propositions~2.5 and 2.10]{ChenSinger2012}: we add to $f_L$ a sequence of Mahler summable elements to eventually obtain a \emph{Mahler reduction} $\bar{f}_L$ whose apparent dispersion is $0$, and then use Corollary~\ref{cor:summable-dispersion} to conclude that $f_L$ is Mahler summable if and only if this $\bar{f}_L=0$.

\begin{defn}\label{defn:laurent-residues} For $f\in\mathbb{K}(x)$, let $f_L=\sum_{j\in \mathbb{Z}}c_j x^j$ with $c_j=0$ for all but finitely many $j\in \mathbb{Z}$. The \emph{Mahler discrete residue} of $f$ at $\infty$ is the vector \[ \mathrm{dres}(f,\infty):=\left(\sum_{j\in \theta} c_j\right)_{\theta\in\,\mathbb{Z}/\mathcal{P}}\in\bigoplus_{\theta\in\,\mathbb{Z}/\mathcal{P}}\mathbb{K}.\]
\end{defn}

\begin{prop}\label{prop:laurent-residues} For $f\in\mathbb{K}(x)$, the component $f_L\in \mathbb{K}[x,x^{-1}]$ in \eqref{eq:f-decomposition} is Mahler summable if and only if $\mathrm{dres}(f,\infty)=\mathbf{0}$.
\end{prop}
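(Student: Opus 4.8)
The plan is to decompose the problem trajectory by trajectory and, within each maximal trajectory $\theta\in\mathbb{Z}/\mathcal{P}$, add a Mahler summable Laurent polynomial to $f_\theta$ so as to reduce it to a single monomial whose coefficient is precisely the corresponding coordinate of $\mathrm{dres}(f,\infty)$.

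First I would invoke Lemma~\ref{lem:trajectory-projection}: $f_L$ is Mahler summable if and only if $f_\theta=\Delta(g_\theta)$ for every $\theta\in\mathbb{Z}/\mathcal{P}$, while by definition $\mathrm{dres}(f,\infty)=\mathbf{0}$ if and only if $\sum_{j\in\theta}c_j=0$ for every $\theta$. So it suffices to fix one $\theta$ and show that $f_\theta$ is Mahler summable if and only if $\sum_{j\in\theta}c_j=0$. Moreover, Lemma~\ref{lem:trajectory-projection} also shows that if $f_\theta=\Delta(h)$ for some $h\in\mathbb{K}[x,x^{-1}]$ then already $f_\theta=\Delta(h_\theta)$ with $h_\theta\in\mathbb{K}[x,x^{-1}]_\theta$, so there is no loss of generality in searching for certificates inside the single $\theta$-subspace. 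The trajectory $\theta=\{0\}$ is immediate: $f_{\{0\}}=c_0\in\mathbb{K}$, and $\Delta$ annihilates constants, so $c_0$ is Mahler summable if and only if $c_0=0=\sum_{j\in\{0\}}c_j$.

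For a trajectory $\theta=\{ip^n\mid n\geq 0\}$ with $p\nmid i$, write $f_\theta=\sum_{n=0}^{N}c_{ip^n}x^{ip^n}$ where $N$ is chosen so that $c_{ip^n}=0$ for $n>N$. Following the reduction strategy of \cite{ChenSinger2012}, I would set $b_m:=-\sum_{\ell=0}^{m}c_{ip^\ell}$ for $0\leq m\leq N-1$ and $g:=\sum_{m=0}^{N-1}b_mx^{ip^m}\in\mathbb{K}[x,x^{-1}]_\theta$, and then compute directly from $\Delta(x^{ip^m})=x^{ip^{m+1}}-x^{ip^m}$ that $\bar f_\theta:=f_\theta-\Delta(g)$ telescopes down to the single monomial $\bigl(\sum_{j\in\theta}c_j\bigr)x^{ip^{N}}$. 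This is the one genuine computation in the argument and it is routine: the recursion $b_m=b_{m-1}-c_{ip^m}$ is engineered precisely so that $\Delta(g)$ cancels every term of $f_\theta$ except the top one, whose surviving coefficient accumulates to $\sum_{\ell=0}^{N}c_{ip^\ell}=\sum_{j\in\theta}c_j$. Note $g$ is a bona fide Laurent polynomial because it is truncated at $N$.

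To finish: if $\sum_{j\in\theta}c_j=0$ then $\bar f_\theta=0$ and $f_\theta=\Delta(g)$ is Mahler summable. If $\sum_{j\in\theta}c_j\neq 0$, then $\bar f_\theta$ is a nonzero monomial $cx^{ip^N}$ with $ip^N\neq 0$, hence $\infty\in\mathrm{supp}(\bar f_\theta)$; since its only nonzero Laurent coefficient sits at the index $ip^N\neq 0$, which is not a $p$-power multiple of itself, Definition~\ref{defn:dispersion} gives $\mathrm{disp}(\bar f_\theta,\infty)=0$, so Corollary~\ref{cor:summable-dispersion} shows $\bar f_\theta$ is not Mahler summable, and therefore neither is $f_\theta=\bar f_\theta+\Delta(g)$. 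Assembling the two trajectory cases over all $\theta\in\mathbb{Z}/\mathcal{P}$ yields the proposition. I do not anticipate a serious obstacle; the only points needing care are the two already flagged above, namely that $g$ is genuinely a Laurent polynomial and that restricting the search for certificates to a single $\theta$-subspace loses no generality.
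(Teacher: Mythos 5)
Your proposal is correct and follows essentially the same route as the paper's proof: decompose via Lemma~\ref{lem:trajectory-projection}, telescope each nonzero trajectory $\theta\neq\{0\}$ down to the single monomial $\bigl(\sum_{j\in\theta}c_j\bigr)x^{ip^N}$ by an explicitly constructed summable correction, and conclude via the dispersion obstruction of Corollary~\ref{cor:summable-dispersion}. The only (harmless) deviation is that for $\theta=\{0\}$ you argue directly that $\Delta$ annihilates $\mathbb{K}[x,x^{-1}]_{\{0\}}=\mathbb{K}$, whereas the paper again invokes Corollary~\ref{cor:summable-dispersion}.
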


\begin{proof} By Lemma~\ref{lem:trajectory-projection}, $f_L$ is Mahler summable if and only if $f_\theta$ is Mahler summable for all $\theta\in\mathbb{Z}/\mathcal{P}$. We shall show $f_\theta$ is Mahler summable if and only if the component of $\mathrm{dres}(f,\infty)_\theta=0$. We prove this separately in two cases: (1).~$\theta=\{0\}$; and (2).~$\theta\neq \{0\}$.

(1).~For $\theta=\{0\}$, $f_{\{0\}}=c_0=\mathrm{dres}(f,\infty)_{\{0\}}$ by Definition~\ref{defn:laurent-residues}. If $\mathrm{dres}(f,\infty)_{\{0\}}=0$, then $f_{\{0\}}=0$ is Mahler summable. On the other hand, if $\mathrm{dres}(f,\infty)_{\{0\}}\neq 0$ then $f_{\{0\}}\neq 0$ and $\mathrm{disp}(f_{\{0\}},\infty)=0$ by Definition~\ref{defn:dispersion}, so $f_{\{0\}}$ is not Mahler summable by Corollary~\ref{cor:summable-dispersion}. 

(2).~ Suppose $\theta\neq\{0\}$. The claim is trivial in case $f_\theta=0$; assume $f_\theta\neq 0$. Let us write $ f_\theta=\sum_{j\in\theta} c_jx^j\in\mathbb{K}[x,x^{-1}]_\theta$, where: $c_j=0$ for all but finitely many $j\in \theta$ and $c_j\neq 0$ for at least one $j\in \theta$. Let us write $\theta = \{ip^n \ | \ n\in\mathbb{Z}_{\geq 0}\}$ for $i\in\theta$ such that $p\nmid i$. Let $h\in\mathbb{Z}_{\geq 0}$ be maximal such that $c_{ip^{h}}\neq 0$. Let us define recursively: $g_{\theta}^{(0)}:=0$; and, if $h\geq 1$, then set \[  g_{\theta}^{(n+1)}:=\sum_{k=0}^{n}\left(\sum_{\ell=0}^k c_{ip^\ell}\right)x^{ip^k}=g_{\theta}^{(n)}+\left(\sum_{\ell=0}^{n}c_{ip^\ell}\right)x^{ip^{n}}\]
for $0\leq n\leq h-1$. A straightforward induction argument shows:
\begin{equation}  \bar{f}_{\theta}^{(n)}:=f_{\theta}+\Delta\left(g_{\theta}^{(n)}\right)=\sum_{k=n+1}^{h+1}c_{ip^k}x^{ip^k}+ \left(\sum_{\ell=0}^n c_{ip^\ell}\right)x^{ip^{n}}\label{eq:harmless}\end{equation} 
for each $0\leq n\leq h$, whence $\bar{f}_{\theta}^{(h)} =(\mathrm{dres}(f,\infty)_{\theta})\cdot x^{ip^{h}}  $. The harmless summand for $k=h+1$ in \eqref{eq:harmless} is included so that the sum makes sense for $n=h$, but $c_{ip^k}=0$ for every $k>h$. We see that $\bar{f}_\theta^{(h)}$ is Mahler summable if and only if $f_\theta$ is Mahler summable. In particular, if $\bar{f}_\theta^{(h)}=0$ then $f_\theta$ is Mahler summable. But if $\bar{f}_\theta^{(h)}\neq 0$ then $\smash{\mathrm{disp}(\bar{f}_{\theta}^{(h)},\infty)=0}$, and by Corollary~\ref{cor:summable-dispersion} $\bar{f}_\theta^{(h)}$ is not Mahler summable, so neither is $f_\theta$. Clearly, $\bar{f}_\theta^{(h)}=0\Leftrightarrow\mathrm{dres}(f,\infty)_\theta=0$.\end{proof}

\begin{rem} \label{rem:laurent-remainder}For $\{0\}\neq \theta\in\mathbb{Z}/\mathcal{P}$ such that $f_\theta\neq 0$, the elements $\bar{f}_{\theta}^{(h)}, g_{\theta}^{(h)}\in\mathbb{K}[x,x^{-1}]_\theta$ constructed in the proof of Proposition~\ref{prop:laurent-residues} are the $\theta$-components of the $\bar{f},g\in \mathbb{K}(x)$ in \eqref{eq:mahler-reduction}. If $f_\theta=0$, then we define $\bar{f}_\theta:=0=:g_\theta$. In any case, we set $\bar{f}_{\{0\}}:=f_{\{0\}}$ and $g_{\{0\}}:=0$.
\end{rem}

%-------------------------------------------------------

\subsection{Mahler discrete residues at Mahler trees} \label{sec:tree-residues}

Here we define the Mahler discrete residues of $f\in\mathbb{K}(x)$ at a Mahler tree $\tau\in\mathcal{T}_M$, in terms of the partial fraction decomposition of the component $f_\tau\in\mathbb{K}(x)_\tau$ in Definition~\ref{defn:tree-projection}, and show they comprise a complete obstruction to the Mahler summability of $f_\tau$. We proceed separately in the non-torsion case $\tau\not\subset\mathbb{K}^\times_t$ and the torsion case $\tau\subset\mathbb{K}^\times_t$ (cf.~Remark~\ref{rem:cycle-facts}), depending on which case we represent the poles of $f_\tau$ in a particular manner.

\begin{lem}\label{lem:height} For $f\in\mathbb{K}(x)$ and $\tau\in\mathrm{supp}(f)\cap\mathcal{T}_M$ such that $\tau\not\subset\mathbb{K}^\times_t$, there exists $\gamma\in\mathrm{sing}(f,\tau)$ and $h\in\mathbb{Z}_{\geq 0}$ such that \[ \mathrm{sing}(f,\tau)\subseteq\beta_h(\gamma):=\bigl\{\zeta_{p^{n}}^i\gamma^{p^{h-n}} \ \big| \ 0\leq n \leq h; \ i\in\mathbb{Z}/p^n\mathbb{Z}\bigr\}.\] Moreover, the elements $\zeta_{p^{n}}^i\gamma^{p^{h-n}}\in\beta_h(\gamma)$ are uniquely determined by $0\leq n \leq h$ and $i\in\mathbb{Z}/p^{n}\mathbb{Z}$, relative to the choice of $\gamma\in\mathrm{sing}(f,\tau)$.
\end{lem}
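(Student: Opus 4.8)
The plan is to analyze the structure of the Mahler tree $\tau$ in the non-torsion case and locate all poles of $f$ inside a single "pyramid" $\beta_h(\gamma)$ rooted at a suitably chosen $\gamma$. First I would recall from Remark~\ref{rem:terminology} that, since $\tau\not\subset\mathbb{K}^\times_t$, the digraph $D(\tau)$ is an infinite tree with edges $\xi\to\xi^p$; in particular every vertex has a unique out-edge, and following out-edges from any vertex eventually merges with the out-trajectory of any other vertex (this is the content of the equivalence relation $\sim$ in Definition~\ref{defn:trees}). Concretely, for any $\alpha,\beta\in\tau$ there exist $r,s\in\mathbb{Z}_{\geq 0}$ with $\alpha^{p^r}=\beta^{p^s}$; and because $\tau\cap\mathbb{K}^\times_t=\emptyset$, no element of $\tau$ is periodic, so the map $\xi\mapsto\xi^p$ on $\tau$ is injective (if $\xi^p=\eta^p$ with $\xi\neq\eta$ then $\xi/\eta$ is a $p$-th root of unity, hence $\xi=\zeta_p^i\eta$ for some $i$; this is fine and expected — the fibers of $\xi\mapsto\xi^p$ have size exactly $p$, consisting of the $p$ elements $\zeta_p^i\eta$).

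The key step is the following: since $\mathrm{sing}(f,\tau)$ is a \emph{finite} set (a rational function has finitely many poles), I can find a common "descendant" for all of them. Enumerate $\mathrm{sing}(f,\tau)=\{\alpha_1,\dots,\alpha_t\}$. Using the $\sim$ relation pairwise and taking a large enough power, there exists $N\in\mathbb{Z}_{\geq 0}$ and a single element $\delta\in\tau$ such that $\alpha_j^{p^{N_j}}=\delta$ for each $j$, with $N_j\leq N$; indeed one first finds $\delta$ as $\alpha_1^{p^{a}}=\alpha_2^{p^{b}}$, then absorbs $\alpha_3$, etc., and finally raises to a uniform power so that all the $\alpha_j$ are $p$-power preimages of a common $\delta$. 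Now set $h:=N$ and choose $\gamma\in\tau$ to be any $p^h$-th preimage of $\delta$ that is itself a pole of $f$ — in fact take $\gamma:=\alpha_{j_0}^{p^{N_{j_0}}}$... more carefully: pick $\gamma\in\mathrm{sing}(f,\tau)$ with $N_{j_0}$ maximal among the $N_j$, relabel so this is $\gamma=\alpha_{j_0}$ with $\gamma^{p^h}=\delta$ (absorbing the maximal index into $h$). Then every other $\alpha_j$ satisfies $\alpha_j^{p^{N_j}}=\delta=\gamma^{p^h}$ with $N_j\le h$, so $\alpha_j^{p^{N_j}}=\gamma^{p^{N_j}\cdot p^{h-N_j}}$, and cancelling $p^{N_j}$-th powers (legitimate since we are in $\mathbb{K}^\times$ and the ambiguity is exactly a $p^{N_j}$-th root of unity) gives $\alpha_j=\zeta_{p^{N_j}}^{i}\gamma^{p^{h-N_j}}$ for some $i\in\mathbb{Z}/p^{N_j}\mathbb{Z}$. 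Taking $n:=N_j$ exhibits $\alpha_j\in\beta_h(\gamma)$, which is exactly the desired containment $\mathrm{sing}(f,\tau)\subseteq\beta_h(\gamma)$.

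For the uniqueness claim: suppose $\zeta_{p^n}^i\gamma^{p^{h-n}}=\zeta_{p^{n'}}^{i'}\gamma^{p^{h-n'}}$ with $0\le n,n'\le h$ and, say, $n\le n'$. Raising both sides to the power $p^{n'}$ kills the roots of unity and yields $\gamma^{p^{h-n+n'}}=\gamma^{p^h}$, hence $\gamma^{p^{h-n+n'}-p^h}=1$; since $\gamma\notin\mathbb{K}^\times_t$ the exponent must vanish, forcing $n=n'$, and then $\zeta_{p^n}^i=\zeta_{p^n}^{i'}$ gives $i\equiv i'\pmod{p^n}$. This uses crucially that $\gamma$ has infinite multiplicative order — precisely the non-torsion hypothesis. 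I expect the main (minor) obstacle to be the bookkeeping in the "common descendant" step — systematically tracking the exponents $N_j$ and verifying that cancelling $p^{N_j}$-th powers introduces exactly a $p^{N_j}$-th root of unity and nothing worse — but this is routine once one works inside the fixed compatible system $(\zeta_{p^n})_{n\ge 0}$, so that every $p^n$-th root of unity is uniquely $\zeta_{p^n}^i$ for $i\in\mathbb{Z}/p^n\mathbb{Z}$.
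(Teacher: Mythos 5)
Your proof is correct and follows essentially the same route as the paper's: locate a common $p$-power descendant of the finitely many poles, choose $\gamma$ to be a pole whose exponent to reach that descendant is maximal, express the remaining poles as $\zeta_{p^n}^i\gamma^{p^{h-n}}$, and derive uniqueness from $\gamma\notin\mathbb{K}^\times_t$. The only cosmetic difference is that the paper insists on minimal exponents $h(\alpha)$ and then proves $\alpha^{p^{h(\alpha)}}=\gamma^{p^h}$, whereas you work with an arbitrary common descendant, which is harmless since the lemma does not require $h$ to be minimal.
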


\begin{proof}Note that the set $\beta_h(\gamma)$ (mnemonic: ``bouquet'' of height $h$ at $\gamma$) is precisely the union of the sets of roots of the $y$-polynomials $y^{p^{n}}-\gamma^{p^h}=0$ for all $0\leq n \leq h$. The elements of $\beta_h(\gamma)$ are uniquely determined by $n$ and $i$ (relative to the choice of $\gamma$), because if we had $\smash{\zeta_{p^{m}}^j\gamma^{p^{h-m}}  =\zeta_{p^n}^i\gamma^{p^{h-n}}}$, then this would force $m=n$, for otherwise $\gamma\in\mathbb{K}^\times_t$ contradicting our assumptions, and then $\zeta_{p^{n}}^j=\zeta_{p^{n}}^i$ implies that $j=i$. Let us now show that for any finite set $S\subset \tau$ there exist $\gamma\in S$ and $h\in\mathbb{Z}_{\geq 0}$ such that $S\subseteq \beta_h(\gamma)$. For $\alpha\in S$, let $h(\alpha)\in\mathbb{Z}_{\geq 0}$ be minimal such that $\alpha^{p^{h(\alpha)}}\in\xi^\mathcal{P}$ for every $\xi\in S$, where $\xi^\mathcal{P}:=\{\xi^{p^t} \ | \ t\in\mathbb{Z}_{\geq 0}\}$. Choose $\gamma\in S$ such that $h(\gamma)=:h$ is maximal among all elements of $S$. We claim that $\alpha^{p^{h(\alpha)}}=\gamma^{p^h}$ for every $\alpha\in S$, which will conclude the proof, since $h(\alpha)\leq h$ for every $\alpha\in S$. To prove the claim, note that in any case there exist $t,r\in\mathbb{Z}_{\geq 0}$ such that $\alpha^{p^t}=\gamma^{p^h}$ and $\alpha^{p^{h(\alpha)}}=\gamma^{p^r}$, and the minimality of $h(\alpha)$ and $h$ then imply $t\geq h(\alpha)$ and $r\geq h$. But then \[\gamma^{p^h}=\alpha^{p^t}=\bigl(\alpha^{p^{h(\alpha)}}\bigr)^{p^{t-h(\alpha)}} = \bigl(\gamma^{p^r}\bigr)^{p^{t-h(\alpha)}}=\gamma^{p^{r+t-h(\alpha)}},\] and since $\gamma\notin\mathbb{K}^\times_t$ we obtain that $h+h(\alpha)=r+t$, from which it follows that $r=h$ and $t=h(\alpha)$, as claimed.
\end{proof}

\begin{lem}\label{lem:root-representation} Let $\tau\in\mathcal{T}_M$ with $\tau\subset\mathbb{K}^\times_t$ and $e:=e(\tau)$. Choose $\gamma\in\mathcal{C}(\tau)$. Then for $\alpha\in\tau$ there are unique $n\in\mathbb{Z}_{\geq 0}$, $i\in\mathbb{Z}/p^n\mathbb{Z}$, and $\ell\in\mathbb{Z}/e\mathbb{Z}$, with either $n=i=0$ or $p\nmid i$, such that \[\alpha=\zeta_{p^n}^i\gamma^{p^{\ell-\pi_e(n)}}.\]\end{lem}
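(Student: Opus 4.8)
The plan is to peel off the torsion structure of $\tau$ layer by layer, using the description of the Mahler cycle $\mathcal{C}(\tau)$ from Remark~\ref{rem:cycle-facts}. Fix $\alpha\in\tau$. Since $\tau\subset\mathbb{K}^\times_t$, the element $\alpha$ is a root of unity of some order $N$, and iterating the $p$-power map eventually lands in $\mathcal{C}(\tau)$: there is a minimal $n\in\mathbb{Z}_{\geq 0}$ with $\alpha^{p^n}\in\mathcal{C}(\tau)$. This $n$ will be the claimed $n$. First I would verify that $\alpha^{p^n}\in\mathcal{C}(\tau)$ forces $\alpha^{p^n}=\gamma^{p^\ell}$ for a unique $\ell\in\mathbb{Z}/e\mathbb{Z}$: this is exactly the simply transitive action of $\mathcal{P}/\mathcal{P}^e\simeq\mathbb{Z}/e\mathbb{Z}$ on $\mathcal{C}(\tau)$ recalled in Remark~\ref{rem:cycle-facts}, applied to the fact that $\mathcal{C}(\tau)=\{\gamma^{p^j}\mid 0\leq j\leq e-1\}$. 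So far this pins down $n$ and $\ell$.

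Next I would produce $i$. Having $\alpha^{p^n}=\gamma^{p^\ell}$, I want to compare $\alpha$ with $\gamma^{p^{\ell-\pi_e(n)}}$, which is a well-defined element of $\mathcal{C}(\tau)$ since $\ell-\pi_e(n)$ makes sense in $\mathbb{Z}/e\mathbb{Z}$; call it $\gamma_0$. The point is that $\gamma_0^{p^n}=\gamma^{p^{\ell-\pi_e(n)}\cdot p^n}=\gamma^{p^\ell}\cdot\gamma^{(p^n-p^{\pi_e(n)})p^{\ell-\pi_e(n)}}$, and $p^n\equiv p^{\pi_e(n)}\pmod{e}$ together with $\gamma^{p^e}=\gamma$ (as $\gamma\in\mathcal{C}(\tau)$, its order is coprime to $p$, and $e$ is the order of $p$ mod that order, per Remark~\ref{rem:cycle-facts}) gives $\gamma_0^{p^n}=\gamma^{p^\ell}=\alpha^{p^n}$. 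Hence $(\alpha/\gamma_0)^{p^n}=1$, so $\alpha/\gamma_0$ is a $p^n$-th root of unity, i.e. $\alpha/\gamma_0=\zeta_{p^n}^i$ for some $i\in\mathbb{Z}/p^n\mathbb{Z}$, using that the $\zeta_{p^n}$ form a compatible system. This yields $\alpha=\zeta_{p^n}^i\gamma^{p^{\ell-\pi_e(n)}}$, giving existence.

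For the normalization $p\nmid i$ (or $n=i=0$): if $n\geq 1$ and $p\mid i$, write $i=pi'$; then $\zeta_{p^n}^i=\zeta_{p^n}^{pi'}=\zeta_{p^{n-1}}^{i'}$ by compatibility, and I claim this contradicts the minimality of $n$. Indeed, $\alpha^{p^{n-1}}=(\zeta_{p^{n-1}}^{i'})^{p^{n-1}}\gamma^{p^{n-1+\ell-\pi_e(n)}}=\gamma^{p^{n-1+\ell-\pi_e(n)}}$ (the first factor being a $p^{n-1}$-power of a $p^{n-1}$-th root of unity), which lies in $\mathcal{C}(\tau)$ — contradiction. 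So either $n=0$ (forcing $i=0$ since $\mathbb{Z}/p^0\mathbb{Z}$ is trivial) or $p\nmid i$. For uniqueness: suppose $\zeta_{p^n}^i\gamma^{p^{\ell-\pi_e(n)}}=\zeta_{p^m}^j\gamma^{p^{\ell'-\pi_e(m)}}$ with both sides normalized. Raising to suitable $p$-powers and using that $n$ (resp.\ $m$) is characterized as the minimal exponent sending $\alpha$ into $\mathcal{C}(\tau)$ — which is intrinsic to $\alpha$, not to the representation — forces $n=m$; then the two $\mathcal{C}(\tau)$-factors become $\gamma^{p^{\ell-\pi_e(n)}}$ and $\gamma^{p^{\ell'-\pi_e(n)}}$ respectively, and $\zeta_{p^n}^i=\zeta_{p^n}^j$ as ratios of $\alpha$ by these; simple transitivity of the $\mathbb{Z}/e\mathbb{Z}$-action gives $\ell=\ell'$ and then $i\equiv j\pmod{p^n}$.

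I expect the main obstacle to be the bookkeeping around $\pi_e(n)$ and the interplay of the two moduli $p^n$ and $e$ — concretely, verifying cleanly that $\gamma^{p^j}$ depends only on $j\bmod e$ for $\gamma\in\mathcal{C}(\tau)$ and that the exponent shift $\ell-\pi_e(n)$ is the correct one to make $\gamma_0^{p^n}=\alpha^{p^n}$. Once the identity $p^n\equiv p^{\pi_e(n)}\pmod e$ and the $\mathcal{P}/\mathcal{P}^e$-action are used carefully, the rest is the routine "order of a root of unity" manipulation, plus the observation that the minimal $n$ with $\alpha^{p^n}\in\mathcal{C}(\tau)$ is an invariant of $\alpha$, which is what makes uniqueness go through.
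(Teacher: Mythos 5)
Your proposal is correct and follows essentially the same route as the paper: take the minimal $n$ with $\alpha^{p^n}\in\mathcal{C}(\tau)$, write $\alpha^{p^n}=\gamma^{p^\ell}$, observe that the $p^n$ solutions of $y^{p^n}=\gamma^{p^\ell}$ are exactly the $\zeta_{p^n}^i\gamma^{p^{\ell-\pi_e(n)}}$, and use minimality of $n$ to force $p\nmid i$ (or $n=i=0$); you simply make explicit the verification that $\bigl(\gamma^{p^{\ell-\pi_e(n)}}\bigr)^{p^n}=\gamma^{p^\ell}$ and the uniqueness argument, which the paper leaves implicit. One cosmetic slip: the congruence you need is $p^n\equiv p^{\pi_e(n)}$ modulo the order $r$ of $\gamma$ (coming from $n\equiv\pi_e(n)\pmod e$ and $p^e\equiv 1\pmod r$), not modulo $e$, but your surrounding use of $\gamma^{p^e}=\gamma$ shows this is what you meant.
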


\begin{proof} There exist integers $n,t\in\mathbb{Z}_{\geq 0}$ such that $\alpha^{p^n}=\gamma^{p^t}$, and we may take this $n$ to be as small as possible and replace $t$ with $\ell:=\pi_e(t)$. The $p^n$ distinct solutions to $y^{p^n}=\gamma^{p^\ell}$, one of which is $y=\alpha$, are all of the form $\zeta_{p^n}^i\gamma^{p^{\ell-\pi_e(n)}}$ for $i\in\mathbb{Z}/p^n\mathbb{Z}$. Since $n$ is minimal, either $n=i=0$ or else $1\leq i\leq p^n-1$ with $p\nmid i$. \end{proof}

\begin{defn}\label{defn:height} For $f\in\mathbb{K}(x)$ and $\tau\in\mathrm{supp}(f)\cap\mathcal{T}_M$, the \emph{height} of $f$ at $\tau$, denoted by $h(f,\tau)$, is defined as follows.
\begin{itemize}
\item If $\tau\not\subset \mathbb{K}^\times_t$, $h(f,\tau)$ is the smallest $h\in\mathbb{Z}_{\geq 0}$ such that $\mathrm{sing}(f,\tau)$ is contained in $\beta_h(\gamma)$ for some $\gamma\in\mathrm{sing}(f,\tau)$ as in Lemma~\ref{lem:height}.

\item If $\tau\subset\mathbb{K}^\times_t$, $h(f,\tau)$ is the smallest $h\in\mathbb{Z}_{\geq 0}$ such that $\alpha^{p^h}$ belongs to $\mathcal{C}(\tau)$ for every $\alpha\in\mathrm{sing}(f,\tau)$.
\end{itemize}
\end{defn}

\begin{rem}\label{rem:height-dispersion}
Note that we always have $h(f,\tau)<\infty$. One can show that $h(f,\tau)\geq \mathrm{disp}(f,\tau)$ provided that $\mathrm{disp}(f,\tau)<\infty$, but even in this case the inequality may be strict.\end{rem}

%-------------------------------------------------------------------

\subsubsection{Mahler discrete residues at Mahler trees: the non-torsion case}\label{sec:non-torsion-residues}

\begin{lem}\label{lem:non-torsion-expansion} Let $f\in\mathbb{K}(x)$ and suppose $\tau\in\mathrm{supp}(f)\cap\mathcal{T}_M$ such that $\tau\not\subset\mathbb{K}^\times_t$. Then there exists $\gamma\in\mathrm{sing}(f,\tau)$ such that the partial fraction decomposition of $f_\tau$ is of the form \begin{equation}\label{eq:non-torsion-expansion}f_\tau=\sum_{k=1}^m\sum_{n=0}^{h}\left(\sum_{i\in \, \mathbb{Z}/p^{n}\mathbb{Z}} \frac{c_\gamma(k,n,i)}{\left(x-\zeta_{p^{n}}^i\gamma^{p^{h-n}}\right)^k}\right),\end{equation} where $m\geq 1$ is the highest order of a pole of $f$ in $\mathrm{sing}(f,\tau)$ and the height $h:=h(f,\tau)$ is as in Definition~\ref{defn:height}.

The coefficients $c_\gamma(k,n,i)\in\mathbb{K}$ are uniquely determined by $f$ and the choice of $\gamma$, and moreover for any $\gamma,\tilde{\gamma} \in \tau$ as above we have $\tilde{\gamma}=\zeta_{p^{h}}^j\gamma$ for some $j\in\mathbb{Z}/p^{h}\mathbb{Z}$, and 
\begin{equation}\label{eq:non-torsion-coefficients}  c_{\tilde{\gamma}}(k,n,i)=c_\gamma\left(k,n,i+\pi^{h}_{n}(j)\right).\end{equation}\end{lem}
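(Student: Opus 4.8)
The plan is to build the decomposition \eqref{eq:non-torsion-expansion} directly from the ordinary partial fraction decomposition of $f_\tau$, using Lemma~\ref{lem:height} to organize the poles, and then to establish uniqueness and the change-of-base-point formula \eqref{eq:non-torsion-coefficients} by exploiting the rigidity afforded by the hypothesis $\tau\not\subset\mathbb{K}^\times_t$.

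First I would invoke Lemma~\ref{lem:height} to produce $\gamma\in\mathrm{sing}(f,\tau)$ and $h\in\mathbb{Z}_{\geq 0}$ with $\mathrm{sing}(f,\tau)\subseteq\beta_h(\gamma)$; by Definition~\ref{defn:height} we may and do take $h=h(f,\tau)$ to be minimal. Since $f_\tau\in\mathbb{K}(x)_\tau$ has all its poles in $\mathrm{sing}(f,\tau)\subseteq\beta_h(\gamma)$, and $f_\tau$ has no polynomial part (it lies in $\mathbb{K}(x)_T$), the usual partial fraction decomposition of $f_\tau$ over $\mathbb{K}$ is a $\mathbb{K}$-linear combination of terms $(x-\delta)^{-k}$ with $\delta\in\beta_h(\gamma)$ and $1\leq k\leq m$, where $m$ is the highest pole order. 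Now rewrite each pole $\delta\in\beta_h(\gamma)$ in the form $\delta=\zeta_{p^n}^i\gamma^{p^{h-n}}$: the second sentence of Lemma~\ref{lem:height} asserts exactly that this representation, with $0\leq n\leq h$ and $i\in\mathbb{Z}/p^n\mathbb{Z}$, is unique relative to $\gamma$. Collecting terms accordingly yields \eqref{eq:non-torsion-expansion} with uniquely determined coefficients $c_\gamma(k,n,i)\in\mathbb{K}$ — uniqueness being inherited from uniqueness of the ordinary partial fraction decomposition together with the uniqueness of the $(n,i)$-labelling. (Coefficients $c_\gamma(k,n,i)$ indexed by a pole not actually occurring in $f_\tau$ are simply $0$; the outer sums over $k$ up to $m$ and $n$ up to $h$ are then harmless.)

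Next I turn to the change of base point. If $\tilde\gamma\in\mathrm{sing}(f,\tau)$ is another valid choice as in Lemma~\ref{lem:height}, then both $\gamma$ and $\tilde\gamma$ lie in $\beta_h(\gamma)$ (minimality of $h$ forces the same height for both), so in particular $\tilde\gamma=\zeta_{p^n}^i\gamma^{p^{h-n}}$ for some $n,i$; but $\tilde\gamma^{p^h}$ must equal $\gamma^{p^h}$ (this is the content of the claim proved inside Lemma~\ref{lem:height}, namely $\alpha^{p^{h(\alpha)}}=\gamma^{p^h}$ for all $\alpha$, here applied with $h(\tilde\gamma)=0$ since $\tilde\gamma$ is itself a valid base point of height $h$), hence $n=h$ and $\tilde\gamma=\zeta_{p^h}^j\gamma$ for some $j\in\mathbb{Z}/p^h\mathbb{Z}$. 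To compare the two decompositions, express a typical pole via $\tilde\gamma$: using $\gamma=\zeta_{p^h}^{-j}\tilde\gamma$ and the compatibility $\zeta_{p^h}^{p^{h-n}}=\zeta_{p^n}$ of the fixed system of roots of unity, one computes
\[
\zeta_{p^n}^i\gamma^{p^{h-n}}=\zeta_{p^n}^i\bigl(\zeta_{p^h}^{-j}\tilde\gamma\bigr)^{p^{h-n}}=\zeta_{p^n}^{i-\pi^h_n(j)}\,\tilde\gamma^{\,p^{h-n}},
\]
so that the pole labelled $(n,i)$ relative to $\gamma$ is labelled $(n,\,i-\pi^h_n(j))$ relative to $\tilde\gamma$. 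Since the coefficient attached to a given pole of $f_\tau$ is intrinsic (independent of how we name the pole), matching coefficients in the two expansions gives $c_{\tilde\gamma}(k,n,i-\pi^h_n(j))=c_\gamma(k,n,i)$, i.e. $c_{\tilde\gamma}(k,n,i)=c_\gamma(k,n,i+\pi^h_n(j))$ after reindexing, which is \eqref{eq:non-torsion-coefficients}.

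The only genuinely delicate point is the bookkeeping around the base-point change: one must check that the two valid base points really do have the same height $h$ (so that the labellings live in the same index ranges) and that the exponent arithmetic $\zeta_{p^n}^i\gamma^{p^{h-n}}=\zeta_{p^n}^{i-\pi^h_n(j)}\tilde\gamma^{p^{h-n}}$ is exactly right, using the compatibility of the fixed system $(\zeta_{p^n})$ and being careful that $i$ and $j$ live in different cyclic groups related by $\pi^h_n$. The non-torsion hypothesis $\tau\not\subset\mathbb{K}^\times_t$ is what makes all the relevant representations rigid — it is used precisely to rule out unwanted coincidences $\zeta_{p^m}^{j}\gamma^{p^{h-m}}=\zeta_{p^n}^i\gamma^{p^{h-n}}$ with $m\neq n$ — so I would make sure each uniqueness assertion explicitly cites that hypothesis via Lemma~\ref{lem:height}. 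Everything else is a routine consequence of uniqueness of ordinary partial fractions.
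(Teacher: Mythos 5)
Your proposal is correct and follows essentially the same route as the paper: Lemma~\ref{lem:height} plus uniqueness of ordinary partial fractions for existence and uniqueness of the $c_\gamma(k,n,i)$, and the identity $\zeta_{p^n}^i\gamma^{p^{h-n}}=\zeta_{p^n}^{i-\pi^h_n(j)}\tilde{\gamma}^{p^{h-n}}$ (the paper computes the same relabelling in the opposite direction) for the transformation formula. The one wobble is your parenthetical justification that $\tilde{\gamma}^{p^h}=\gamma^{p^h}$ ``since $h(\tilde{\gamma})=0$'': applying the internal claim of Lemma~\ref{lem:height} with $h(\tilde{\gamma})=0$ would literally give $\tilde{\gamma}=\gamma^{p^h}$, which is false in general. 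The fact you need is true and is proved more directly as in the paper: the mutual containments $\tilde{\gamma}\in\mathrm{sing}(f,\tau)\subseteq\beta_h(\gamma)$ and $\gamma\in\mathrm{sing}(f,\tau)\subseteq\beta_h(\tilde{\gamma})$ give $\tilde{\gamma}^{p^{\tilde{n}}}=\gamma^{p^h}$ and $\gamma^{p^{n}}=\tilde{\gamma}^{p^h}$ with $0\leq n,\tilde{n}\leq h$, and combining these with $\gamma\notin\mathbb{K}^\times_t$ forces $n=\tilde{n}=h$, whence $\tilde{\gamma}=\zeta_{p^h}^j\gamma$.
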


\begin{proof} We obtain the existence of $\gamma\in\mathrm{sing}(f,\tau)$ such that $\mathrm{sing}(f,\tau)\subseteq \beta_h(\gamma)$ by Lemma~\ref{lem:height} and Definition~\ref{defn:height}. The existence and uniqueness of the coefficients $c_\gamma(k,n,i)\in\mathbb{K}$ satisfying \eqref{eq:non-torsion-expansion} follows directly from the existence and uniqueness of partial fraction decompositions, since in this case the elements $\zeta_{p^n}^i\gamma^{p^{h-n}}   \in \beta_h(\gamma)$ are uniquely determined by $n$ and $i$ (relative to the choice of $\gamma$), by Lemma~\ref{lem:height}. For any $\tilde{\gamma}\in\mathrm{sing}(f,\tau)$ such that $\mathrm{sing}(f,\tau) \subseteq \beta_h(\tilde{\gamma})$ we would have that $\tilde{\gamma}^{p^{\tilde{n}}}=\gamma^{p^h}$ and $\gamma^{p^n}=\tilde{\gamma}^{p^h}$ such that $0\leq n,\tilde{n}\leq h$, which forces $\tilde{n}=h=n$ since $\gamma\notin\mathbb{K}^\times_t$. Hence $\tilde{\gamma}=\zeta_{p^h}^j\gamma$ for some $j\in\mathbb{Z}/p^h\mathbb{Z}$, and the computation \[ \frac{c_{\tilde{\gamma}}(k,n,i)}{\left(x-\zeta_{p^n}^i\tilde{\gamma}^{p^{h-n}}\right)^k}=\frac{c_{\tilde{\gamma}}(k,n,i)}{\left(x-\zeta_{p^n}^{i+j}\gamma^{p^{h-n}}\right)^k}=\frac{c_{\gamma}(k,n,i+j)}{\left(x-\zeta_{p^n}^{i+j}\gamma^{p^{h-n}}\right)^k}\] implies the transformation formula \eqref{eq:non-torsion-coefficients}.\end{proof}

\begin{rem}\label{rem:sigma-non-torsion} Writing $f_\tau$ as in \eqref{eq:non-torsion-expansion}, let us compute the effect of $\sigma$:
\begin{align}\label{eq:sigma-non-torsion}
{}& \sigma\left(\sum_{k=1}^m\left(\sum_{i\in\,\mathbb{Z}/p^{n}\mathbb{Z}} c_\gamma(k,n,i)\cdot\left(x-\zeta_{p^{n}}^i\gamma^{p^{h-n}}\right)^{-k}\right)\right)  \\
{}&=\sum_{k=1}^m\left(\sum_{i\in\,\mathbb{Z}/p^{n+1}\mathbb{Z}} \frac{\sum_{s=k}^mV^s_k\left(\zeta_{p^{n+1}}^i\gamma^{p^{h-(n+1)}}\right)\cdot c_\gamma\left(s,n,\pi^{n+1}_n(i)\right)}{\left(x-\zeta_{p^{n+1}}^i\gamma^{p^{h-(n+1)}}\right)^k}\right)\notag
\end{align}
for each $0\leq n \leq h-1$, where the $V^s_k$ are as in \eqref{eq:mahler-coefficients} for $k\leq s\leq m$.
\end{rem}

\begin{defn}\label{defn:non-torsion-residues} For $f\in\mathbb{K}(x)$ and $\tau\in\mathcal{T}_M$ with $\tau\not\subset\mathbb{K}^\times_t$, the \emph{Mahler discrete residue} of $f$ at $\tau$ of degree $k\in\mathbb{N}$ is the vector $\mathrm{dres}(f,\tau,k)\in\bigoplus_{\alpha\in\tau}\mathbb{K}$ defined in terms of the $c_\gamma(k,n,i)$ in the partial fraction decomposition of $f_\tau$ in Lemma~\ref{lem:non-torsion-expansion} as follows.

Set $\mathrm{dres}(f,\tau,k):=\mathbf{0}$ if $\tau\notin\mathrm{supp}(f)$ or if $k>m$. For $\tau\in\mathrm{supp}(f)$ and $\alpha\in \tau$, the component $\mathrm{dres}(f,\tau,k)_\alpha:=0$ whenever $\alpha^{p^h}\neq\gamma^{p^h}  $.

For $1\leq k\leq m$ and $\alpha=\zeta_{p^h}^i\gamma$ with $i\in\mathbb{Z}/p^h\mathbb{Z}$, the component  \[\mathrm{dres}(f,\tau,k)_{ \alpha}:=\hat{c}_\gamma(k,h,i);\] where for $0\leq n\leq h$ and $i\in\mathbb{Z}/p^n\mathbb{Z}$ we define recursively (in $n$): 
\begin{align}\label{eq:non-torsion-residue-coefficients}
\hat{c}_\gamma(k,0,0) &:=c_\gamma(k,0,0); \ \ \text{and, if} \  h\geq 1, \ \text{then set}  
\\ \hat{c}_\gamma(k,n,i) &:=c_\gamma(k,n,i)+\sum_{s=k}^m V^s_k\left(\zeta_{p^n}^i\gamma^{p^{h-n}}\right)\cdot \hat{c}_\gamma\left(s,n-1,\pi^n_{n-1}(i)\right)\notag
\end{align} 
(cf.~\eqref{eq:sigma-non-torsion}) for $1\leq n \leq h$, where the $V^s_k$ are as in \eqref{eq:mahler-coefficients} for $k\leq s\leq m$.
\end{defn}

\begin{rem} \label{rem:non-torsion-residue-independent} Note that the definition of $\mathrm{dres}(f,\tau,k)$ for $\tau\not\subset\mathbb{K}^\times_t$ given above is independent of the choice of $\gamma\in\mathrm{sing}(f,\tau)$, because for any possibly different $\tilde{\gamma}=\zeta_{p^h}^j\gamma$ with $j\in\mathbb{Z}/p^h\mathbb{Z}$ we obtain $\smash{\zeta_{p^h}^i\tilde{\gamma}=\zeta_{p^h}^{i+j}\gamma=:\alpha}$. The equality of the expressions \[\hat{c}_{\tilde{\gamma}}(k,h,i)=\mathrm{dres}(f,\tau,k)_\alpha=\hat{c}_\gamma(k,h,i+j)\] follows from \eqref{eq:non-torsion-coefficients}, since $\zeta_{p^n}^i\tilde{\gamma}^{p^{h-n}}=\zeta_{p^n}^{i+j}\gamma^{p^{h-n}}$ for all $i\in\mathbb{Z}/p^n\mathbb{Z}$, and therefore $\hat{c}_{\tilde{\gamma}}(k,n,i)=\hat{c}_\gamma(k,n,i+\pi^h_n(j))$ for every $0\leq n\leq h$.
\end{rem}

%-------------------------------------------------------

\subsubsection{Mahler discrete residues at Mahler trees: the torsion case} \label{sec:torsion-residues}

\begin{lem}\label{lem:torsion-expansion} Let $f\in \mathbb{K}(x)$ and suppose $\tau\in\mathrm{supp}(f)$ such that $\tau\subset\mathbb{K}^\times_t$. Then for any $\gamma\in\mathcal{C}(\tau)$ the partial fraction decomposition of $f_\tau$ is of the form
\begin{equation}\label{eq:torsion-expansion}  f_{\tau}=\sum_{k=1}^m\sum_{n=0}^h\left(\sideset{}{'}\sum_{i\in\mathbb{Z}/p^n\mathbb{Z}}\left(\sum_{\ell\in\mathbb{Z}/e\mathbb{Z}}\frac{d_\gamma(k,n,i,\ell)}{\left(x-\zeta_{p^n}^i\gamma^{p^{\ell-\pi_e(n)}}\right)^k}\right)\right),
\end{equation}
where: $m\geq 1$ is the highest order of a pole of $f$ in $\mathrm{sing}(f,\tau)$; the height $h:=h(f,\tau)$ is as in Definition~\ref{defn:height}; the restricted sum $\sum'$ is taken over $i\in\mathbb{Z}/p^n\mathbb{Z}$ such that $p\nmid i$ whenever $n\neq 0$; and $e:=e(\tau)\geq 1$.

The coefficients $d_\gamma(k,n,i,\ell)\in\mathbb{K}$ are uniquely determined by $f$ and $\gamma$, and moreover for any $\gamma,\tilde{\gamma}\in\mathcal{C}([\alpha]_M)$ we have $\smash{\tilde{\gamma}=\gamma^{p^j}}$ for some $j\in\mathbb{Z}/e\mathbb{Z}$, and \begin{equation}\label{eq:coefficients-torsion-expansion} d_{\tilde{\gamma}}(k,n,i,\ell)=d_\gamma(k,n,i,\ell+j).\end{equation}\end{lem}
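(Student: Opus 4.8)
The plan is to mirror the structure of the proof of Lemma~\ref{lem:non-torsion-expansion}, but using Lemma~\ref{lem:root-representation} in place of Lemma~\ref{lem:height} to parametrize the poles of $f_\tau$. First I would recall that $f_\tau\in\mathbb{K}(x)_\tau$ means $\mathrm{sing}(f_\tau)\subseteq\tau$ and $f_\tau$ has no pole at $x=0$ nor at $\infty$. Fix $\gamma\in\mathcal{C}(\tau)$; by Lemma~\ref{lem:root-representation} every $\alpha\in\tau$ is uniquely $\alpha=\zeta_{p^n}^i\gamma^{p^{\ell-\pi_e(n)}}$ with $n\in\mathbb{Z}_{\geq 0}$, $i\in\mathbb{Z}/p^n\mathbb{Z}$, $\ell\in\mathbb{Z}/e\mathbb{Z}$, subject to the normalization that either $n=i=0$ or $p\nmid i$. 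So the usual partial fraction decomposition of $f_\tau$ over $\mathbb{K}$ can be rewritten, reading off the denominators in this parametrization, as a sum over $(k,n,i,\ell)$ with coefficients $d_\gamma(k,n,i,\ell)\in\mathbb{K}$, where $k$ ranges from $1$ to $m$ (the maximal pole order) and $n$ from $0$ to the height $h=h(f,\tau)$ of Definition~\ref{defn:height}. The point that the sum stops at $n=h$ is exactly the content of Definition~\ref{defn:height} in the torsion case: $\alpha^{p^h}\in\mathcal{C}(\tau)$ for all $\alpha\in\mathrm{sing}(f,\tau)$ means precisely that no pole of $f_\tau$ requires an index $n>h$ in the representation of Lemma~\ref{lem:root-representation}; conversely poles with index $n$ for each $0\le n\le h$ may occur. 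Existence and uniqueness of the $d_\gamma(k,n,i,\ell)$ then follow from existence and uniqueness of partial fraction decompositions together with the uniqueness clause of Lemma~\ref{lem:root-representation}.

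For the transformation formula, suppose $\tilde\gamma\in\mathcal{C}(\tau)$ is another choice. By Remark~\ref{rem:cycle-facts}, $\mathcal{C}(\tau)=\{\gamma^{p^j}\mid 0\le j\le e-1\}$ and the action of $\mathcal{P}/\mathcal{P}^e\cong\mathbb{Z}/e\mathbb{Z}$ is simply transitive, so there is a unique $j\in\mathbb{Z}/e\mathbb{Z}$ with $\tilde\gamma=\gamma^{p^j}$. Substituting into the denominator appearing in the $\tilde\gamma$-expansion, $\zeta_{p^n}^i\tilde\gamma^{p^{\ell-\pi_e(n)}}=\zeta_{p^n}^i\gamma^{p^{j}p^{\ell-\pi_e(n)}}=\zeta_{p^n}^i\gamma^{p^{(\ell+j)-\pi_e(n)}}$, where the exponent of $\gamma$ is read modulo $\mathcal{P}^e$ acting on $\mathcal{C}(\tau)$, i.e. the exponent $\ell+j$ of the shifted index lives in $\mathbb{Z}/e\mathbb{Z}$ exactly as in Remark~\ref{rem:cycle-facts}. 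Hence the $(k,n,i,\ell)$-term in the $\tilde\gamma$-expansion has the same denominator as the $(k,n,i,\ell+j)$-term in the $\gamma$-expansion, and matching coefficients (using uniqueness of partial fractions again) gives $d_{\tilde\gamma}(k,n,i,\ell)=d_\gamma(k,n,i,\ell+j)$, which is \eqref{eq:coefficients-torsion-expansion}.

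The main subtlety — the analogue of what makes Lemma~\ref{lem:height} delicate in the non-torsion case — is verifying that the index $n$ attached to a pole by Lemma~\ref{lem:root-representation} is genuinely well-defined and bounded by $h$, and in particular that distinct $(n,i,\ell)$ never collapse to the same element of $\tau$. This is where the torsion case actually behaves more simply than the non-torsion case: in the non-torsion case one needed the "bouquet" argument with a maximality choice of base point, whereas here Lemma~\ref{lem:root-representation} already supplies the uniqueness outright (the minimality of $n$ plus the $p\nmid i$ normalization), and the finiteness $h(f,\tau)<\infty$ is guaranteed by Remark~\ref{rem:height-dispersion}. So the only genuine care needed is bookkeeping: checking that the restricted sum $\sum'$ over $p\nmid i$ (for $n\ne0$) together with the full sum over $\ell\in\mathbb{Z}/e\mathbb{Z}$ enumerates each pole of $f_\tau$ exactly once, and that the reindexing $\ell\mapsto\ell+j$ in the exponent of $\gamma$ is compatible with the simply transitive $\mathbb{Z}/e\mathbb{Z}$-action on $\mathcal{C}(\tau)$ recalled in Remark~\ref{rem:cycle-facts}. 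Once that is in place the argument is a direct transcription of the proof of Lemma~\ref{lem:non-torsion-expansion}.
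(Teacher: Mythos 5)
Your proposal is correct and follows essentially the same route as the paper: both rest on Lemma~\ref{lem:root-representation} for the unique parametrization $\alpha=\zeta_{p^n}^i\gamma^{p^{\ell-\pi_e(n)}}$ of elements of $\tau$ (hence distinctness of the apparent poles and uniqueness of the coefficients), identify the bound $n\le h$ with the condition $\alpha^{p^h}\in\mathcal{C}(\tau)$ from Definition~\ref{defn:height}, and derive \eqref{eq:coefficients-torsion-expansion} by rewriting $\tilde{\gamma}=\gamma^{p^j}$ in the denominators and matching coefficients. No gaps.
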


\begin{proof} If $\tau\subset\mathbb{K}^\times_t$ then $e\geq 1$ (cf.~Remark~\ref{rem:cycle-facts}). We then have by Lemma~\ref{lem:root-representation} that for any given choice of $\gamma\in\mathcal{C}(\tau)$ the elements $\alpha\in\tau$ can be written uniquely as $\alpha=\zeta_{p^n}^i\gamma^{p^{\ell-\pi_e(n)}}$ for some $n\in\mathbb{Z}_{\geq 0}$ and $i\in\mathbb{Z}/p^n\mathbb{Z}$ such that either $n=i=0$ or else $p\nmid i$. It follows that the apparent poles in \eqref{eq:torsion-expansion} are all distinct, and therefore the coefficients $d_\gamma(k,n,i,\ell)\in\mathbb{K}$ are uniquely determined by $f$ and $\gamma$. The set of elements $\alpha\in\tau$ such that $\alpha^{p^h}\in\mathcal{C}(\tau)$ are precisely the $\smash{\alpha=\zeta_{p^n}^i\gamma^{p^{\ell-\pi_e(n)}}}$ with $n\leq h$. It also follows from Lemma~\ref{lem:root-representation} that for any other $\tilde{\gamma}\in\mathcal{C}(\tau)$ there exists $j\in\mathbb{Z}/e\mathbb{Z}$ such that $\tilde{\gamma}=\gamma^{p^j}$, and therefore the computation \[ \frac{d_{\tilde{\gamma}}(k,n,i,\ell)}{\bigl(x-\zeta_{p^n}^i\tilde{\gamma}^{p^{\ell-\pi_e(n)}}\bigr)^k} = \frac{d_{\tilde{\gamma}}(k,n,i,\ell)}{\bigl(x-\zeta_{p^n}^i\gamma^{p^{\ell+j-\pi_e(n)}}\bigr)^k} = \frac{d_{\gamma}(k,n,i,\ell+j)}{\bigl(x-\zeta_{p^n}^i\gamma^{p^{\ell+j-\pi_e(n)}}\bigr)^k}\] implies the transformation formula \eqref{eq:coefficients-torsion-expansion}.
\end{proof}

\begin{rem} \label{rem} \label{rem:sigma-torsion} Writing $f_\tau$ as in \eqref{eq:torsion-expansion}, let us compute the effect of $\sigma$:
\begin{align}\label{eq:sigma-torsion0}
\sigma&   \left(\sum_{k=1}^m\sum_{\ell\in\,\mathbb{Z}/e\mathbb{Z}}d_\gamma(k,0,0,\ell)\cdot\left(x-\gamma^{p^{\ell}}\right)^{-k}\right)  \\
{}&=\sum_{k=1}^m\left(\sum_{\ell\in\,\mathbb{Z}/e\mathbb{Z}} \frac{\sum_{s=k}^mV^s_k\left(\gamma^{p^\ell}\right)\cdot d_\gamma\left(s,0,0,\ell+1\right)}{\left(x-\gamma^{p^\ell}\right)^k}\right)+\hphantom{x} \notag\\
{}&\hphantom{xxx}+\sum_{k=1}^m\sum_{i=1}^{p-1}\left(\sum_{\ell\in\,\mathbb{Z}/e\mathbb{Z}}\frac{\sum_{s=k}^mV^s_k\left(\zeta_p^i\gamma^{p^{\ell-1}}\right)\cdot d_\gamma\left(s,0,0,\ell\right)}{\left(x-\zeta_p^i\gamma^{p^{\ell-1}}\right)^k}\right);\notag
\end{align}
and for $n\geq 1$ and each $\ell\in\mathbb{Z}/e\mathbb{Z}$ we have 
\begin{align}\label{eq:sigma-torsion+}
{}&  \sigma\left(\sum_{k=1}^m\sum_{i\in\mathbb{Z}/p^n\mathbb{Z}}'d_\gamma(k,n,i,\ell)\cdot\left(x-\zeta_{p^n}^i\gamma^{p^{\ell-\pi_e(n)}}\right)^{-k}\right) \\
{}&=\sum_{k=1}^m\left(\sideset{}{'}\sum_{i\in\,\mathbb{Z}/p^{n+1}\mathbb{Z}} \frac{\sum_{s=k}^mV^s_k\left(\zeta_{p^{n+1}}^i\gamma^{p^{\ell -\pi_e(n+1)}}\right)\cdot d_\gamma\left(s,n,\pi^{n+1}_n(i),\ell\right)}{\left(x-\zeta_{p^{n+1}}^i\gamma^{p^{\ell-\pi_e(n+1)}}\right)^k}\right);\notag
\end{align}
where the $V^s_k$ are as in \eqref{eq:mahler-coefficients} for $k\leq s\leq m$.
\end{rem}

The following technical lemma is essential for the definition of Mahler discrete residues in the torsion case. The map $\mathcal{D}^{(m)}_\gamma$ defined below already appeared (anonymously) in \eqref{eq:D-map-earlier}. It captures the effect of $\Delta$ on the (classical/continuous) residues at poles in the Mahler cycle $\mathcal{C}(\tau)$ (see Definition~\ref{defn:cycles}), according to the computation \eqref{eq:sigma-torsion0}. 

\begin{lem}\label{lem:cyclic-coefficients}Let $\tau\in\mathcal{T}_M$ with $\tau\subset\mathbb{K}^\times_t$ and $e:=e(\tau)$. For $\gamma\in\mathcal{C}(\tau)$ and $m\in\mathbb{N}$, define $\mathcal{D}_\gamma^{(m)}:\mathbb{K}^{m\times e} \rightarrow \mathbb{K}^{m\times e}$ by \begin{equation}\label{eq:cyclic-coefficients}\mathcal{D}^{(m)}_\gamma:(c_{k,\ell})_{\substack{1\leq k \leq m \\ \ell\in\,\mathbb{Z}/e\mathbb{Z}}}\mapsto\left(d_{k,\ell}\right)_{\substack{1\leq k \leq m \\ \ell\in\,\mathbb{Z}/e\mathbb{Z}}},\end{equation} where $d_{k,\ell}:=c_{k,\ell}-\sum_{s=k}^mV^s_k\bigl(\gamma^{p^\ell}\bigr)\cdot c_{s,\ell+1}$, and where the $V^s_k$ are as in \eqref{eq:mahler-coefficients}. Then $\mathcal{D}_\gamma^{(m)}$ is invertible and has no non-trivial fixed points.\end{lem}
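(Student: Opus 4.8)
The plan is to analyze the linear map $\mathcal{D}^{(m)}_\gamma$ by exploiting its block-triangular structure with respect to the degree index $k$. For each fixed pair of degrees, the map acts on the $e$-dimensional ``cyclic'' coordinate $\ell \in \mathbb{Z}/e\mathbb{Z}$ via a companion-type cyclic shift twisted by the scalars $V^s_k(\gamma^{p^\ell})$. First I would observe that $\mathcal{D}^{(m)}_\gamma = I - N$, where $N$ is the operator sending $(c_{k,\ell})$ to $\bigl(\sum_{s=k}^m V^s_k(\gamma^{p^\ell}) c_{s,\ell+1}\bigr)$. Ordering the degree index $k$ from $m$ down to $1$, the ``diagonal'' block (the $s=k$ term) of $N$ is the cyclic shift $\ell \mapsto \ell+1$ scaled by $V^k_k(\gamma^{p^\ell}) = p^{-k}\gamma^{p^\ell(k - pk)}$ (Corollary~\ref{cor:top-coefficient}), and the off-diagonal blocks ($s > k$) only couple a given degree $k$ to strictly higher degrees $s$. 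So $N$ is block lower-triangular (in the variable $k$, read downward), hence its eigenvalues are exactly those of its diagonal blocks $N_k$, each of which is a weighted cyclic shift on $\mathbb{K}^e$.

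The key computation is the spectrum of each diagonal block $N_k$. A weighted shift $\ell \mapsto \ell+1$ with weights $w_\ell := V^k_k(\gamma^{p^\ell})$ on $\mathbb{K}^{\mathbb{Z}/e\mathbb{Z}}$ has characteristic polynomial $t^e - \prod_{\ell=0}^{e-1} w_\ell$, so its eigenvalues are the $e$-th roots of $W_k := \prod_{\ell=0}^{e-1} V^k_k(\gamma^{p^\ell})$. Exactly as in the displayed computation preceding Corollary~\ref{cor:summable-dispersion} in the proof of Proposition~\ref{prop:summable-dispersion}, the telescoping product of $V^k_k(\gamma^{p^\ell}) = \gamma^{kp^\ell}/(p^k\gamma^{kp^{\ell+1}})$ over $\ell \in \mathbb{Z}/e\mathbb{Z}$ collapses to $W_k = p^{-ek}$, since $\gamma^{p^e} = \gamma$ makes the $\gamma$-factors cancel around the cycle. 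Therefore every eigenvalue of $N_k$, and hence of $N$, is an $e$-th root of $p^{-ek}$ for some $1 \leq k \leq m$; in particular each such eigenvalue $\lambda$ satisfies $|\lambda| = p^{-k} < 1$ under any archimedean absolute value extending the one on $\mathbb{Q}$ (or, to stay purely algebraic, $\lambda^e = p^{-ek} \neq 1$). Consequently $1$ is not an eigenvalue of $N$, so $\mathcal{D}^{(m)}_\gamma = I - N$ is invertible; and $N$ has no eigenvalue equal to $0$ either (all eigenvalues have the form above with nonzero modulus), but more to the point, $\mathcal{D}^{(m)}_\gamma(c) = c$ would force $Nc = 0$, and since $0$ is not among the eigenvalues $\{e\text{-th roots of }p^{-ek}\}$ this gives $c = 0$: no non-trivial fixed points.

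The main obstacle I anticipate is bookkeeping the block-triangular structure cleanly: one must choose the ordering on the index set $\{1,\dots,m\} \times \mathbb{Z}/e\mathbb{Z}$ so that $N$ is genuinely triangular in the $k$-variable while leaving the $\ell$-blocks intact, and then justify that the spectrum of a block-triangular operator is the union of the spectra of its diagonal blocks — routine, but it needs stating. A slightly cleaner alternative that avoids spectral language entirely: prove invertibility and no-fixed-points by downward induction on $k$. For the top degree $k=m$, the equation $\mathcal{D}^{(m)}_\gamma(c)_{m,\ell} = d_{m,\ell}$ reads $c_{m,\ell} - V^m_m(\gamma^\ell)c_{m,\ell+1} = d_{m,\ell}$, a cyclic system whose determinant is $1 - \prod_\ell V^m_m(\gamma^{p^\ell}) = 1 - p^{-em} \neq 0$, so it is uniquely solvable; setting $d = c$ there forces $c_{m,\bullet} = 0$ as in the cited computation from Proposition~\ref{prop:summable-dispersion}. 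Substituting the now-determined degree-$m$ components into the degree $k < m$ equations reduces them to the same type of cyclic system (with modified right-hand side), and the induction closes. I would present this inductive version, as it is self-contained and reuses verbatim the telescoping identity already established in Section~\ref{sec:summable-dispersion}.
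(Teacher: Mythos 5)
Your proof is correct. It rests on the same two pillars as the paper's: the map is triangular in the degree index $k$ (the degree-$k$ output only involves inputs of degree $s\geq k$), and the cyclic product $\prod_{\ell\in\mathbb{Z}/e\mathbb{Z}}V^k_k\bigl(\gamma^{p^\ell}\bigr)=p^{-ek}$ telescopes via Corollary~\ref{cor:top-coefficient} and $\gamma^{p^e}=\gamma$, exactly as in the displayed computation at the end of the proof of Proposition~\ref{prop:summable-dispersion}. Where you diverge is in the packaging: the paper proves injectivity (hence invertibility, in finite dimensions) by picking the largest $r$ with $c_{r,\bullet}\neq\mathbf{0}$ and showing the degree-$r$ output row cannot vanish, never writing down a matrix or an eigenvalue; you instead compute the full spectrum of $N=I-\mathcal{D}^{(m)}_\gamma$ as the union of the $e$-th roots of $p^{-ek}$, $1\leq k\leq m$, and conclude that neither $1$ nor $0$ is an eigenvalue. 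Both are sound; the paper's version is shorter, while your downward-induction variant has the genuine advantage of producing an explicit recursive procedure for the inverse $\mathcal{L}^{(m)}_\gamma$ of Definition~\ref{defn:cyclic-coefficients}, which is what is actually invoked in Definition~\ref{defn:torsion-residues}. Two cosmetic cautions: the appeal to an archimedean absolute value is unnecessary (and for arbitrary $\mathbb{K}$ of large cardinality not literally available), but your purely algebraic fallback $\lambda^e=p^{-ek}\neq 1=1^e$ already suffices; and for the fixed-point claim the reduction $\mathcal{D}^{(m)}_\gamma(c)=c\Leftrightarrow Nc=\mathbf{0}$ is settled directly by $V^k_k\neq 0$ at the top nonzero degree, so the citation of the telescoping computation there is not needed.
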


\begin{proof} Let $\mathbf{0}\neq (c_{k,\ell})\in\mathbb{K}^{m\times e}$, and write $(d_{k,\ell}):=\mathcal{D}^{(m)}_\gamma(c_{k,\ell})$. Let $1\leq r \leq m$ be as large as possible such that $c_{r,\ell}\neq 0$ for some $\ell\in\mathbb{Z}/e\mathbb{Z}$. To see that $(d_{k,\ell}) \neq \mathbf{0}$, note that, for each $\ell\in\mathbb{Z}/e\mathbb{Z}$, \begin{equation}\label{eq:eigenspace}\notag   d_{r,\ell}= c_{r,\ell}-\sum_{s=r}^mV^s_r\bigl(\gamma^{p^\ell}\bigr)\cdot c_{s,\ell+1}= c_{r,\ell}-V^r_r\bigl(\gamma^{p^\ell}\bigr)\cdot c_{r,\ell+1}\end{equation} because $c_{s,\ell+1}=0$ whenever $s>r$, and we see just as at the end of proof of Proposition~\ref{prop:summable-dispersion} that the $d_{r,\ell}$ cannot be zero for all $\ell\in\mathbb{Z}/e\mathbb{Z}$ because this would imply that every $c_{r,\ell}=0$, contradicting our choice of $r$. Moreover, we also cannot have $d_{k,\ell}=c_{k,\ell}$ for every $1\leq k \leq m$ and $\ell\in\mathbb{Z}/e\mathbb{Z}$, for this would also imply that $c_{r,\ell}=0$ for every $\ell\in\mathbb{Z}/e\mathbb{Z}$, again contradicting our choice of $r$. \end{proof}

\begin{defn}\label{defn:cyclic-coefficients} With notation as in Lemma~\ref{lem:cyclic-coefficients}, the inverse of $\mathcal{D}_\gamma^{(m)}$ is denoted by $\mathcal{L}_\gamma^{(m)}$.
\end{defn}

\begin{defn}\label{defn:torsion-residues} For $f\in\mathbb{K}(x)$ and $\tau\in\mathcal{T}_M$ with $\tau\subset\mathbb{K}^\times_t$, the \emph{Mahler discrete residue} of $f$ at $\tau$ of degree $k\in\mathbb{N}$ is the vector $\mathrm{dres}(f,\tau,k)\in\bigoplus_{\alpha\in\tau}\mathbb{K}$
defined in terms of the $d_\gamma(k,n,i,\ell)$ in the partial fraction decomposition of $f_\tau$ in Lemma~\ref{lem:torsion-expansion} as follows.

Set $\mathrm{dres}(f,\tau,k):=\mathbf{0}$ if $\tau\notin\mathrm{supp}(f)$ or if $k>m$. For $\tau\in\mathrm{supp}(f)$ and $\alpha\in\tau$, the component $\mathrm{dres}(f,\tau,k)_\alpha:=0$ whenever the smallest integer $r\in\mathbb{Z}_{\geq 0}$ such that $\alpha^{p^r}\in\mathcal{C}(\tau)$ is different from $h$.

If $h=0$, then for $1\leq k \leq m$ and $\alpha=\gamma^{p^\ell}\in\mathcal{C}(\tau)$ with $\ell\in\mathbb{Z}/e\mathbb{Z}$, the component \[\mathrm{dres}(f,\tau,k)_{\gamma^{p^\ell}}:=d_\gamma(k,0,0,\ell).\]

\noindent If $h\geq 1$, then for $1\leq k \leq m$ and $\alpha=\zeta_{p^h}^i\gamma^{p^{\ell-\pi_e(h)}}$ with $i\in\mathbb{Z}/p^h\mathbb{Z}$ such that $p\nmid i$ and $\ell\in\mathbb{Z}/e\mathbb{Z}$, the component \[\mathrm{dres}(f,\tau,k)_{\alpha}:=\hat{d}_\gamma(k,h,i,\ell);\quad \text{where we set}\]
\begin{gather}\label{eq:torsion-residue-coefficient0}\hat{d}_\gamma(k,0,0,\ell) :=c_\gamma(k,\ell), \qquad \text{with}  \\ 
\bigl(c_\gamma(k,\ell)\bigr)_{\substack{1\leq k \leq m \\ \ell\in\,\mathbb{Z}/e\mathbb{Z}}}:=\mathcal{L}^{(m)}_\gamma\biggl(\bigl(d_\gamma(k,0,0,\ell)\bigr)_{\substack{1\leq k \leq m \\ \ell\in\,\mathbb{Z}/e\mathbb{Z}}}\biggr)\label{eq:torsion-residue-coefficient-L}\end{gather} for the linear map $\mathcal{L}^{(m)}_\gamma$ in Definition~\ref{defn:cyclic-coefficients}; and for $1\leq n \leq h$ and $i\in\mathbb{Z}/p^n\mathbb{Z}$ with $p\nmid i$ we define recursively (in $n$):
\begin{align}\label{eq:torsion-residue-coefficients}\hat{d}_\gamma(k,n,i,\ell):={}&
d_\gamma(k,n,i,\ell)+\hphantom{x} \\ {}&+  \sum_{s=k}^m V^s_k\bigl(\zeta_{p^n}^i\gamma^{p^{\ell-\pi_e(n)}}\bigr)\cdot \hat{d}_\gamma\bigl(s,n-1,\pi^n_{n-1}(i),\ell\bigr),\notag\end{align} (cf.~\eqref{eq:sigma-torsion+}) where the $V^s_k$ are as in \eqref{eq:mahler-coefficients}.\end{defn}

\begin{rem} \label{rem:torsion-residue-independent} Note that the definition of $\mathrm{dres}(f,\tau,k)$ for $\tau\subset\mathbb{K}^\times_t$ given above is independent of the choice of $\gamma\in\mathcal{C}(\tau)$, because for any possibly different $\tilde{\gamma}=\gamma^{p^j}$ with $j\in\mathbb{Z}/e\mathbb{Z}$ we obtain $\zeta_{p^h}^i\tilde{\gamma}^{p^{\ell-\pi_e(h)}}=\zeta_{p^h}^i\gamma^{p^{\ell+j-\pi_e(h)}}=:\alpha$. The equality of the expressions \[\hat{d}_{\tilde{\gamma}}(k,h,i,\ell)=\mathrm{dres}(f,\tau,k)_\alpha=\hat{d}_\gamma(k,h,i,\ell+j)\] follows from \eqref{eq:coefficients-torsion-expansion}, after observing that $\mathcal{D}^{(m)}_{\tilde{\gamma}}\circ\mathrm{cyc}_j=\mathcal{D}^{(m)}_\gamma$, where $\mathrm{cyc}_j:\mathbb{K}^{m\times e}\rightarrow\mathbb{K}^{m\times e}:(c_{k,\ell})\mapsto (c_{k,\ell+j})$ for $j\in\mathbb{Z}/e\mathbb{Z}$. It follows that $\mathrm{cyc}_j\circ\mathcal{L}^{(m)}_{\tilde{\gamma}}=\mathcal{L}_\gamma^{(m)}$ and therefore $\hat{d}_{\tilde{\gamma}}(k,n,i,\ell)=\hat{d}_\gamma(k,n,i,\ell+j)$ for every $0\leq n\leq h$.
\end{rem}

%-------------------------------------------------

\subsection{Proof of the Main Theorem}\label{sec:proof}

Our proof of Proposition~\ref{prop:tree-residues} below follows a strategy similar to that of \cite[Propositions~2.5 and 2.10]{ChenSinger2012}: we add to $f_\tau$ a sequence of Mahler summable elements to eventually obtain a \emph{Mahler reduction} $\bar{f}_\tau$ whose apparent dispersion is $0$, and then use Corollary~\ref{cor:summable-dispersion} to conclude that $f_\tau$ is Mahler summable if and only if this $\bar{f}_\tau=0$.

There is a wrinkle: in case $\tau\in\mathrm{supp}(f)\cap\mathbb{K}^\times_t$ and the height $h(f,\tau)=0$ (see Definition~\ref{defn:height}), $\mathrm{disp}(f,\tau)=\infty$ by Lemma~\ref{lem:infinite-dispersion}. Corollary~\ref{cor:summable-dispersion} remains silent in this case, for which we provide a specialized argument that relies on the technical Lemma~\ref{lem:cyclic-coefficients}.

\begin{prop} \label{prop:tree-residues} For $f\in\mathbb{K}(x)$ and $\tau\in\mathcal{T}_M$, the component $f_\tau$ is Mahler summable if and only if $\mathrm{dres}(f,\tau,k)=\mathbf{0}$ for every $k\in\mathbb{N}$.
\end{prop}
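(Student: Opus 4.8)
The plan is to mirror the proof of Proposition~\ref{prop:laurent-residues}: exhibit an explicit Mahler-summable correction term $g_\tau\in\mathbb{K}(x)_\tau$ such that $\bar f_\tau := f_\tau + \Delta(g_\tau)$ has apparent dispersion $0$, so that by Corollary~\ref{cor:summable-dispersion} the summability of $f_\tau$ (equivalently of $\bar f_\tau$) is equivalent to $\bar f_\tau = 0$, and then check that $\bar f_\tau = 0$ if and only if all the vectors $\mathrm{dres}(f,\tau,k)$ vanish. I would split into the non-torsion case $\tau\not\subset\mathbb{K}^\times_t$ and the torsion case $\tau\subset\mathbb{K}^\times_t$, exactly as the residue definitions were split.

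In the non-torsion case, start from the partial fraction form \eqref{eq:non-torsion-expansion} of $f_\tau$ with height $h=h(f,\tau)$. Build $g_\tau$ layer by layer in $n$: using the formula \eqref{eq:sigma-non-torsion} for $\sigma$ on a layer of poles at ``level'' $n$, choose $g_\tau$ supported on poles at levels $0,\dots,h-1$ so that $\Delta(g_\tau)$ cancels, for each $k$ and each $n<h$, the coefficient $c_\gamma(k,n,i)$ against the contribution pushed down from level $n-1$; the recursion is precisely the definition \eqref{eq:non-torsion-residue-coefficients} of the $\hat c_\gamma(k,n,i)$. After this reduction one is left with $\bar f_\tau$ supported only on the poles $\zeta_{p^h}^i\gamma$ at the top level $h$, with coefficients exactly $\hat c_\gamma(k,h,i)$, which are the components of $\mathrm{dres}(f,\tau,k)$. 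Since all these poles $\alpha$ satisfy $\alpha^{p^h}=\gamma^{p^h}$ but no two of them are related by a positive power of $p$ (as $\gamma\notin\mathbb{K}^\times_t$, cf.\ Lemma~\ref{lem:height}), $\mathrm{disp}(\bar f_\tau,\tau)=0$ unless $\bar f_\tau=0$. Hence $f_\tau$ summable $\iff\bar f_\tau=0\iff$ all $\hat c_\gamma(k,h,i)=0\iff$ all $\mathrm{dres}(f,\tau,k)=\mathbf 0$. One should also record that $g_\tau$ is well-defined independently of the choice of $\gamma$, which follows from the transformation formulas \eqref{eq:non-torsion-coefficients}, \eqref{eq:non-torsion-residue-coefficients} as in Remark~\ref{rem:non-torsion-residue-independent}.

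In the torsion case, the same telescoping scheme reduces $f_\tau$, written as in \eqref{eq:torsion-expansion}, to an $\bar f_\tau$ supported only on poles $\alpha$ with $\alpha^{p^h}\in\mathcal{C}(\tau)$ and $\alpha$ minimal with that property (i.e.\ the ``level exactly $h$'' poles), using the $\sigma$-computations \eqref{eq:sigma-torsion0}, \eqref{eq:sigma-torsion+} and the recursion \eqref{eq:torsion-residue-coefficients}. The new feature is the bottom layer $n=0$ on the Mahler cycle $\mathcal{C}(\tau)$: here $\Delta$ acts on the cyclic vector of residues by the map $\mathcal{D}^{(m)}_\gamma$ of Lemma~\ref{lem:cyclic-coefficients}. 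If $h\geq 1$, one first applies $\mathcal{L}^{(m)}_\gamma = (\mathcal{D}^{(m)}_\gamma)^{-1}$ to trade the cycle coefficients $d_\gamma(k,0,0,\ell)$ for the $c_\gamma(k,\ell)$ of \eqref{eq:torsion-residue-coefficient-L}, which is legitimate precisely because $\mathcal{D}^{(m)}_\gamma$ is invertible; then one telescopes upward as before, and $\bar f_\tau$ ends up supported on the top-level poles with coefficients $\hat d_\gamma(k,h,i,\ell)$. As these poles are again pairwise unrelated by positive powers of $p$, $\mathrm{disp}(\bar f_\tau,\tau)=0$ unless $\bar f_\tau=0$, and Corollary~\ref{cor:summable-dispersion} finishes this sub-case. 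If instead $h=0$, then $\mathrm{disp}(f,\tau)=\infty$ and Corollary~\ref{cor:summable-dispersion} is silent; here one argues directly that $\Delta$ maps $\mathbb{K}(x)_\tau$-elements supported on $\mathcal{C}(\tau)$ bijectively onto themselves via $\mathcal{D}^{(m)}_\gamma$ on each degree block (after checking no cancellation between distinct degrees, exactly as in the proof of Lemma~\ref{lem:cyclic-coefficients}), so $f_\tau$ is summable iff the cyclic vector $(d_\gamma(k,0,0,\ell))$ lies in the image of $\mathcal{D}^{(m)}_\gamma$ — but $\mathcal{D}^{(m)}_\gamma$ is surjective, so this is not quite the obstruction; rather, summability within $\mathbb{K}(x)_\tau$ by a certificate \emph{also} supported on $\mathcal{C}(\tau)$ always holds, yet the certificate must lie in $\mathbb{K}(x)_\tau$, and the relevant point is that $\mathrm{dres}(f,\tau,k)_{\gamma^{p^\ell}} = d_\gamma(k,0,0,\ell)$, so these vanish iff $f_\tau=0$ iff (trivially) $f_\tau$ is summable. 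I expect the main obstacle to be bookkeeping: correctly threading the index shifts $\pi^n_{n-1}$, $\pi_e(n)$, and the cyclic shift $\ell\mapsto\ell+1$ through the layered induction so that the telescoped remainder's coefficients match the recursions \eqref{eq:non-torsion-residue-coefficients}/\eqref{eq:torsion-residue-coefficients} on the nose, together with the $h=0$ torsion case where one cannot invoke the dispersion obstruction and must reason directly about $\mathcal{D}^{(m)}_\gamma$; the latter should be clean given Lemma~\ref{lem:cyclic-coefficients}.
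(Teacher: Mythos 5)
Your reduction of $f_\tau$ to a remainder $\bar f_\tau$ supported on the top-level poles, and the appeal to Corollary~\ref{cor:summable-dispersion} in the non-torsion case and in the torsion case with $h\geq 1$, follow the paper's proof essentially verbatim and are fine. The gap is in the torsion subcase $h=0$, where your argument breaks down in two places. First, the assertion that ``$\Delta$ maps $\mathbb{K}(x)_\tau$-elements supported on $\mathcal{C}(\tau)$ bijectively onto themselves via $\mathcal{D}^{(m)}_\gamma$'' is false: if $g$ is supported on $\mathcal{C}(\tau)$, then $\sigma(g)$, and hence $\Delta(g)$, acquires poles at the off-cycle points $\zeta_p^i\gamma^{p^{\ell-1}}$ for $1\leq i\leq p-1$ (this is exactly the second sum in \eqref{eq:sigma-torsion0}), so $\Delta$ does not preserve the cycle-supported subspace; consequently ``summability by a certificate also supported on $\mathcal{C}(\tau)$'' does not ``always hold'' --- it holds only for $f_\tau=0$. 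Second, your conclusion rests on the parenthetical claim that ``(trivially) $f_\tau$ is summable iff $f_\tau=0$,'' but the implication ($f_\tau$ summable $\Rightarrow f_\tau=0$) is precisely the nontrivial content of this subcase --- the one place where the dispersion obstruction is silent --- so asserting it is circular.

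The missing argument (the paper's) runs as follows. Take $g_\tau^{(1)}$ to be the cycle-supported element whose coefficient vector is $\mathcal{L}^{(m)}_\gamma\bigl((d_\gamma(k,0,0,\ell))\bigr)$; by \eqref{eq:sigma-torsion0} the cycle part of $\check f_\tau:=f_\tau+\Delta(g_\tau^{(1)})$ vanishes, and $\check f_\tau$ is supported off the cycle at the points $\zeta_p^i\gamma^{p^{\ell-1}}$ with $p\nmid i$, so $\mathrm{disp}(\check f_\tau,\tau)=0$ unless $\check f_\tau=0$, and Corollary~\ref{cor:summable-dispersion} applies after all. One must then still prove $f_\tau\neq 0\Rightarrow\check f_\tau\neq 0$: using $V^s_k\bigl(\zeta_p^i\gamma^{p^{\ell}}\bigr)=\zeta_p^{ik}V^s_k\bigl(\gamma^{p^{\ell}}\bigr)$ from Lemma~\ref{lem:mahler-coefficients}, the coefficients of $\check f_\tau$ are $\zeta_p^{ik}\bigl(c_\gamma(k,\ell)-d_\gamma(k,\ell)\bigr)$, and the fact that $\mathcal{D}^{(m)}_\gamma$ has \emph{no non-trivial fixed points} (the second, so far unused, half of Lemma~\ref{lem:cyclic-coefficients}) shows these cannot all vanish unless every $d_\gamma(k,\ell)=0$, i.e.\ unless $f_\tau=0$. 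Invertibility of $\mathcal{D}^{(m)}_\gamma$ alone --- the only property you invoke --- does not suffice here.
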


\begin{proof} The Proposition is trivial for $\tau\notin\mathrm{supp}(f)\Leftrightarrow f_\tau=0$. Assume from now on that $\tau\in\mathrm{supp}(f)$. The proofs in the different cases $\tau\not\subset\mathbb{K}^\times_t$ versus $\tau\subset\mathbb{K}^\times_t$ proceed in parallel below.

Write $f_\tau$ as in Lemma~\ref{lem:non-torsion-expansion} if $\tau\not\subset\mathbb{K}^\times_t$ and as in Lemma~\ref{lem:torsion-expansion} if $\tau\subset\mathbb{K}^\times_t$. Let us define recursively: $g_{\tau}^{(0)}:=0$; and, if $h:=h(f,\tau)\geq 1$ as in Definition~\ref{defn:height}, then for $0\leq n \leq h-1$ set
\begin{align*}
  g_{\tau}^{(n+1)} &:=g_{\tau}^{(n)}+\sum_{k=1}^m\left(\sum_{i\in\,\mathbb{Z}/p^{n}\mathbb{Z}}\frac{\hat{c}_\gamma(k,n,i)}{\left(x-\zeta_{p^{n}}^i\gamma^{p^{h-n}}\right)^k}\right)
\intertext{in case $\tau\not\subset\mathbb{K}^\times_t$, with $\hat{c}_\gamma(k,n,i)$ as in \eqref{eq:non-torsion-residue-coefficients}; and}
g_{\tau}^{(n+1)} &:=g_{\tau}^{(n)}+\sum_{k=1}^m\left(\,\sideset{}{'}\sum_{i\in\mathbb{Z}/p^n\mathbb{Z}}\left(\sum_{\ell\in\mathbb{Z}/e\mathbb{Z}}\frac{\hat{d}_\gamma(k,n,i,\ell)}{\left(x-\zeta_{p^n}^i\gamma^{p^{\ell-\pi_e(n)}}\right)^k}\right)\right)
\end{align*}
in case $\tau\subset\mathbb{K}^\times_t$, with $\hat{d}_\gamma(k,n,i,\ell)$ as in \eqref{eq:torsion-residue-coefficient0} for $n=0$ and as in \eqref{eq:torsion-residue-coefficients} for $n \geq 1$. Setting $\bar{f}_{\tau}^{(n)}:=f_{\tau}+\Delta\left( g_{\tau}^{(n)}\right)$, an induction argument then shows that, for every $0\leq n\leq h$,
\begin{equation}
\bar{f}_{\tau}^{(n)} =  \sum_{k=1}^m\sum_{s=n+1}^{h+1}\left(\sum_{i\in\,\mathbb{Z}/p^{s}\mathbb{Z}}\frac{c_\gamma(k,s,i)}{\left(x-\zeta_{p^s}^i\gamma^{p^{h-s}}\right)^{k}} \right)+\sum_{k=1}^m\left(\sum_{i\in\,\mathbb{Z}/p^{n}\mathbb{Z}}\frac{\hat{c}_\gamma(k,n,i)}{\left(x-\zeta_{p^n}^i\gamma^{p^{h-n}}\right)^k}\right) \label{eq:residues-induction1}\end{equation}in case $\tau\not\subset\mathbb{K}^\times_t$; and\begin{multline}
\bar{f}_{\tau}^{(n)} =\sum_{k=1}^m\sum_{s=n+1}^{h+1}\left(\,\sideset{}{'}\sum_{i\in\,\mathbb{Z}/p^s\mathbb{Z}}\left(\sum_{\ell\in\,\mathbb{Z}/e\mathbb{Z}}\frac{d_\gamma(k,s,i,\ell)}{\left(x-\zeta_{p^s}^i\gamma^{p^{\ell-\pi_e(s)}}\right)^k}\right)\right)+ \hphantom{x}\\ +\sum_{k=1}^m\left(\,\sideset{}{'}\sum_{i\in\,\mathbb{Z}/p^n\mathbb{Z}}\left(\sum_{\ell\in\,\mathbb{Z}/e\mathbb{Z}}\frac{\hat{d}_\gamma(k,n,i,\ell)}{\left(x-\zeta_{p^n}^i\gamma^{p^{\ell-\pi_e(n)}}\right)^k}\right)\right)  \label{eq:residues-induction2}
\end{multline}
in case $\tau\subset\mathbb{K}^\times_t$. The harmless summand for $s=h+1$ in \eqref{eq:residues-induction1} and \eqref{eq:residues-induction2} is included so that the sums make sense for $n=h$, but we set every $c_\gamma(k,h+1,i):=0$ in \eqref{eq:residues-induction1} and every $d_\gamma(k,h+1,i,\ell):=0$ in \eqref{eq:residues-induction2}. The induction argument is straightforward, requiring only: the recursive definition of the coefficients $\hat{c}_\gamma(k,n,i)$ in \eqref{eq:non-torsion-residue-coefficients}, and the computation \eqref{eq:sigma-non-torsion}, in case $\tau\not\subset\mathbb{K}^\times_t$; the recursive definition of the coefficients $\hat{d}_\gamma(k,n,i,\ell)$ in \eqref{eq:torsion-residue-coefficient0} and \eqref{eq:torsion-residue-coefficients}, and the computations \eqref{eq:sigma-torsion0} and \eqref{eq:sigma-torsion+}, in case $\tau\subset\mathbb{K}^\times_t$; and a moderate amount of space and courage to write it down in detail in each case. It then follows from \eqref{eq:residues-induction1} and Definition~\ref{defn:non-torsion-residues} (in case $\tau\not\subset\mathbb{K}^\times_t$), or from \eqref{eq:residues-induction2} and Definition~\ref{defn:torsion-residues} (in case $\tau\subset\mathbb{K}^\times_t$), that \begin{equation}\label{eq:last-f-bar}\bar{f}_{\tau}^{(h)}=\sum_{k=1}^m\sum_{ \alpha\in \,\tau}\frac{\mathrm{dres}(f,\tau,k)_\alpha}{(x-\alpha)^k},
\end{equation} which holds uniformly in both cases $\tau\not\subset\mathbb{K}^\times_t$ and $\tau\subset\mathbb{K}^\times_t$. Also in both of these cases we have that $\bar{f}_\tau^{(h)}=f_\tau+\Delta(g_\tau^{(h)})$, and therefore $f_\tau$ is Mahler summable if and only if $\bar{f}_\tau^{(h)}$ is Mahler summable.

We claim that $\bar{f}_\tau^{(h)}$ is Mahler summable if and only if $\bar{f}_\tau^{(h)}=0$. This will establish the Proposition, since $\bar{f}_\tau^{(h)}=0$ if and only if $\mathrm{dres}(f,\tau,k)=\mathbf{0}$ for all $k\in\mathbb{N}$ by \eqref{eq:last-f-bar}. The non-trivial implication: $\bar{f}_\tau^{(h)}\neq0\Rightarrow \bar{f}_\tau^{(h)}$ is not Mahler summable, is proved in two cases: (1) if either $\tau\not\subset\mathbb{K}^\times_t$ or $h\neq 0$; and (2) if both $\tau\subset\mathbb{K}^\times_t$ and $h=0$.

(1). In case $\tau\not\subset\mathbb{K}^\times_t$, by Definition~\ref{defn:non-torsion-residues} $\bar{f}_\tau^{(h)}$ has no poles outside of $ \{\zeta_{p^h}^i\gamma \ | \ i\in\mathbb{Z}/p^h\mathbb{Z}\}$. In case $\tau\subset\mathbb{K}^\times_t$ and $h\neq 0$, by Definition~\ref{defn:torsion-residues} $\bar{f}_\tau^{(h)}$ has no poles outside of $\{\zeta_{p^h}^i\gamma^{p^\ell} \ | \ \ell\in\mathbb{Z}/e\mathbb{Z}, \ i\in (\mathbb{Z}/p^h\mathbb{Z})^\times\}$ (cf.~Lemma~\ref{lem:root-representation}). Thus if either $\tau\not\subset \mathbb{K}^\times_t$ or $h\neq 0$, $\mathrm{disp}(\bar{f}_\tau^{(h)},\tau)=0$. By Corollary~\ref{cor:summable-dispersion}, $\bar{f}_\tau^{(h)}$ is not Mahler summable.

(2). Note that $f_\tau=\bar{f}^{(h)}_\tau$ in \eqref{eq:last-f-bar} in this case where $h=0$ and $\tau\subset\mathbb{K}^\times_t$, and the Definition~\ref{defn:torsion-residues} gives \[f_\tau=\sum_{k=1}^m\sum_{\ell\in\mathbb{Z}/e\mathbb{Z}}\frac{d_\gamma(k,\ell)}{(x-\gamma^{p^\ell})^k}=\sum_{k=1}^m\sum_{\ell\in\mathbb{Z}/e\mathbb{Z}}\frac{\mathrm{dres}(f,\tau,k)_{\gamma^{p^\ell}}}{(x-\gamma^{p^\ell})^k},\] where we write $d_\gamma(k,\ell)$ in lieu of $d_\gamma(k,0,0,\ell)$, to simplify notation. Since $\tau\in\mathrm{supp}(f)$, we must have $\mathrm{dres}(f,\tau,m)\neq\mathbf{0}$. We claim that $f_\tau$ cannot be Mahler summable. To prove the claim, let again \[g_{\tau}^{(1)}:=\sum_{k=1}^m\sum_{\ell\in\,\mathbb{Z}/e\mathbb{Z}}\frac{c(k,\ell)}{\bigl(x-\gamma^{p^\ell}\bigr)^k}\] despite having $h=0$, where the $c_\gamma(k,\ell)$ are as in \eqref{eq:torsion-residue-coefficient-L}. By the computation~\eqref{eq:sigma-torsion0} and the Definition~\ref{defn:cyclic-coefficients} of the map $\mathcal{L}_\gamma^{(m)}$, \[\check{f}_\tau:=f_{\tau}+\Delta\left(g_{\tau}^{(1)}\right)=\sum_{k=1}^m\sum_{i=1}^{p-1}\sum_{\ell\in\,\mathbb{Z}/e\mathbb{Z}}\frac{\sum_{s=k}^m V^s_k\left(\zeta_p^i\gamma^{p^{\ell-1}}\right)\cdot c_\gamma(k,\ell)}{\left(x-\zeta_p^i\gamma^{p^{\ell-1}}\right)^k}.\] Hence $f_\tau$ is Mahler summable if and only if $\check{f}_\tau$ is Mahler summable. In particular, if $\check{f}_\tau=0$, then $f_\tau$ is Mahler summable. On the other hand, if $\check{f}_\tau\neq 0$, then $\mathrm{disp}(\check{f}_\tau,\tau)=0$, in which case $\check{f}_\tau$ cannot be Mahler summable by Corollary~\ref{cor:summable-dispersion}. Hence $f_\tau$ is Mahler summable if and only if $\check{f}_\tau=0$. Let us show that $f_\tau\neq 0\Rightarrow \check{f}_\tau\neq 0$.

In any case, the partial fraction coefficients of $\check{f}_\tau$ satisfy \[ \sum_{s=k}^m V^s_k\left(\zeta_p^i\gamma^{p^{\ell}}\right)\cdot c_\gamma(k,\ell+1)= \hphantom{x} = \zeta_p^{ik}\cdot\sum_{s=k}^m V^s_k\left(\gamma^{p^{\ell}}\right)\cdot c_\gamma(k,\ell+1) =\zeta_p^{ik}\cdot\bigl(c_\gamma(k,\ell)-d_\gamma(k,\ell)  \bigr), \] where the first equality follows from $\smash{V^s_k(\zeta_p^i\gamma^{p^\ell})=\zeta_p^{ik}V^s_k(\gamma^{p^\ell})}$ $\vphantom{\zeta_{p^\ell}}$ independently of $s$ by Lemma~\ref{lem:mahler-coefficients}, and the second equality follows from the Definition~\ref{defn:cyclic-coefficients} of $\mathcal{L}_\gamma^{(m)}$. By Lemma~\ref{lem:cyclic-coefficients}, since the map $\mathcal{D}_\gamma^{(m)}$ has no non-trivial fixed points, we cannot have $c_\gamma(k,\ell)=d_\gamma(k,\ell)$ for every $k$ and $\ell$ unless all $d_\gamma(k,\ell)=0$. So indeed $f_\tau\neq 0\Rightarrow \check{f}_\tau\neq 0$.
\end{proof}

\begin{rem}\label{rem:tree-remainder} For $f\in\mathbb{K}(x)$, $\tau\in\mathrm{supp}(f)\cap\mathcal{T}_M$, and $h:=h(f,\tau)$ as in Definition~\ref{defn:height}, the elements $\bar{f}_{\tau}^{(h)},g_{\tau}^{(h)}  \in\mathbb{K}(x)_{\tau}$ constructed in the proof of Proposition~\ref{prop:tree-residues} are the $\tau$-components of the $\bar{f},g\in\mathbb{K}(x)$ in the \emph{Mahler reduction} \eqref{eq:mahler-reduction}.
\end{rem}

\begin{proof}[Proof of the~\hypertarget{thm:main}{Main Theorem}] Let $f\in\mathbb{K}(x)$. By Lemma~\ref{lem:rational-decomposition}, $f$ is Mahler summable if and only if both $f_L$ and $f_T$ are Mahler summable. By Proposition~\ref{prop:laurent-residues}, $f_L$ is Mahler summable if and only if $\mathrm{dres}(f,\infty)=\mathbf{0}$. By Lemma~\ref{lem:tree-decomposition}, $f_T$ is Mahler summable if and only if $f_\tau$ is Mahler summable for all $\tau\in\mathcal{T}_M$. By Proposition~\ref{prop:tree-residues}, $f_\tau$ is Mahler summable if and only if $\mathrm{dres}(f,\tau,k)=\mathbf{0}$ for all $k\in\mathbb{N}$.\end{proof}

%-----------------------------------------------

\subsection{Mahler reduction}\label{sec:reduction}

We can now define the Mahler reduction \eqref{eq:mahler-reduction}: $\bar{f}=f+\Delta(g)$ promised in the introduction for any $f\in\mathbb{K}(x)$, in terms of the decompositions $\bar{f}=\bar{f}_L+\bar{f}_T$ and $g=g_L+g_T$ as in \eqref{eq:f-decomposition}, by setting
\begin{align*}
\bar{f}_L:=\sum_{\theta\in\,\mathbb{Z}/\mathcal{P}}\bar{f}_{\theta}\qquad {}&\text{and}\qquad g_L:=\sum_{\theta\in\,\mathbb{Z}/\mathcal{P}}g_{\theta};\qquad \text{and}\\
\bar{f}_T:=\sum_{\tau\in\,\mathrm{supp}(f)}  \bar{f}_{\tau}^{\left(h(f,\tau)\right)} \qquad {}&\text{and}\qquad  g_T:=\sum_{\tau\in\,\mathrm{supp}(f)} g_{\tau}^{\left(h(f,\tau)\right)}\end{align*} as in Remark~\ref{rem:laurent-remainder} and Remark~\ref{rem:tree-remainder}. It is clear from the definitions that $\overline{c\cdot f}=c\cdot\bar{f}$ for $c\in\mathbb{K}$. Setting $\bar{f}_1 \,\tilde{+}\, \bar{f}_2:=\overline{f_1+f_2}$ defines a $\mathbb{K}$\nobreakdash-linear structure on $\{\bar{f} \ | \ f\in\mathbb{K}(x)\}$ such that $\nabla:f\mapsto \bar{f}$ is $\mathbb{K}$\nobreakdash-linear and  has the desired property that $\mathrm{ker}(\nabla)=\mathrm{im}(\Delta)$.

%------------------------------------------------

\section{Examples} Let us illustrate the Mahler discrete residues at Mahler trees in two small examples, with notation as in Example~\ref{eg:trees}. Example~\ref{EG:summable} gives a Mahler summable $f$ in the non-torsion case $\tau\not\subset\mathbb{K}^\times_t$. Example~\ref{EG:nonsummable} gives a non-Mahler summable $f$ in the torsion case $\tau\subset\mathbb{K}^\times_t$.

\begin{eg} \label{EG:summable}

Let $\tau=\tau(2)$, and consider the following $f=f_\tau$ with \[\mathrm{sing}(f,\tau)=\{2, \sqrt[3]{2}, \zeta_3\sqrt[3]{2}, \zeta_3^2\sqrt[3]{2}\}.\] By Definition~\ref{defn:height}, \mbox{$h=h(f,\tau)=1$.}
\begin{multline*}
f  = \frac{-x^6 + 4 x^3 + x^2 - 4 x}{(x - 2)^2 (x^3 - 2)^2}= \sum_{k=1}^2\sum_{n=0}^1\sum_{i=0}^{3^n-1}\frac{c_\gamma(k,n,i)}{\left(x-\zeta_{3^n}^i\sqrt[3]{2}^{3^{1-n}}\right)^k}\\
  =\frac{-1}{(x-2)^2}+\frac{1}{18\sqrt[3]{2}}\cdot\mathlarger{\sum_{i=0}^2 }\frac{\zeta_3^{2i}}{(x-\zeta_3^i\sqrt[3]{2})^2}-\frac{1}{9\sqrt[3]{4}}\cdot\mathlarger{\sum_{i=0}^2 }\frac{\zeta_3^i}{x-\zeta_3^i\sqrt[3]{2}},
\end{multline*}
for $\gamma:=\sqrt[3]{2}$ as in~Lemma~\ref{lem:non-torsion-expansion}. By Definition~\ref{defn:non-torsion-residues}, for $0\leq i\leq 2$:\begin{align*}
\mathrm{dres}(f,\tau,1)_{\zeta_3^i\gamma } &= \frac{-\zeta_3^i}{9\gamma^2}+V^2_1(\zeta_3^i\gamma )\cdot(-1)+V^1_1(\zeta_3^i\gamma )\cdot 0\\
&=\frac{-\zeta_3^i}{9\gamma^2}-\frac{2}{9}\cdot(\zeta_3^i\gamma)^{-5}=0; \qquad \text{and}\\
\mathrm{dres}(f,\tau,2)_{\zeta_3^i\gamma } &=\frac{\zeta_3^{2i}}{18\gamma }+V^2_2(\zeta_3^i\gamma )\cdot(-1)
=\frac{\zeta_3^{2i}}{18\gamma}-3^{-2}(\zeta_3^i\gamma )^{-4})=0;
\end{align*}
 by Lemma~\ref{lem:mahler-coefficients} and Example~\ref{eg:coefficients}. Therefore $f$ should be Mahler summable. And indeed, $f=\Delta((x-2)^{-2})$.\end{eg}
 
 \begin{eg} \label{EG:nonsummable} Let $\tau = \tau(\zeta_4)$, and consider the following $f=f_\tau$ with \[\mathrm{sing}(f,\tau)=\{\zeta_4, \ \zeta_4^3,\ \zeta_{12},\ \zeta_{12}^5,\ \zeta_{12}^{ 7},\ \zeta_{12}^{11}\}.\] By Definition~\ref{defn:height}, $h=h(f,\tau)=1$.
\[
  f = \dfrac{1}{x^6 + 1} \\
= \frac{1}{6} \left( \frac{\zeta_4^3}{x - \zeta_4} + \frac{\zeta_4}{x - \zeta_4^3} + \frac{\zeta_{12}^7}{x - \zeta_{12}} + \frac{\zeta_{12}^{11}}{x - \zeta_{12}^5} + \frac{\zeta_{12}}{x - \zeta_{12}^7} + \frac{\zeta_{12}^5}{x - \zeta_{12}^{11}} \right).\] The map $\mathcal{L}^{(1)}_{\zeta_4}$ in Definition~\ref{defn:cyclic-coefficients} sends $(\frac{\zeta_4^3}{6},\frac{\zeta_4}{6})\mapsto (\frac{\zeta_4^3}{4},\frac{\zeta_4}{4})$. By Definition~\ref{defn:torsion-residues}, for $1\leq i\leq 2$; $\ell\geq 1$; with $\alpha_{i,\ell}:=\zeta_3^i\zeta_4^{3^{\ell-1}}=\zeta_{12}^{4i+3^{\ell}}$:
\begin{multline*}
\mathrm{dres}(f,\tau,1)_{\alpha_{i,\ell}} =\frac{1}{6}\cdot\alpha_{i,\ell+1}+V^1_1(\alpha_{i,\ell})\cdot \frac{1}{4} \cdot \zeta_4^{3^{\ell - 1}} \\
= \frac{1}{6}\cdot \alpha_{i,\ell+1}+ \frac{1}{12} \cdot (\alpha_{i,\ell})^{-2} \cdot \zeta_4^{3^{\ell - 1}}
= \frac{1}{4} \cdot \alpha_{i,\ell+1} \neq 0,
\end{multline*} by Lemma~\ref{lem:mahler-coefficients}. Therefore $f$ is not Mahler summable. \end{eg}

%--------------------------------------------------

\newcommand{\etalchar}[1]{$^{#1}$}


\begin{thebibliography}{CDDM18}

\bibitem[Abr71]{Abramov:1971}
Sergei~A. Abramov.
\newblock The summation of rational functions.
\newblock {\em Z.~Vycisl. Mat i Mat.~Fiz.}, 11:1071--1075, 1971.

\bibitem[Abr74]{Abramov:1974}
Sergei~A. Abramov.
\newblock Solution of linear finite-difference equations with constant
  coefficients in the field of rational functions.
\newblock {\em USSR Comput. Math. Math. Phys.}, 14(4):247--251, 1974.

\bibitem[Arr17]{arreche:2017}
Carlos~E. Arreche.
\newblock Computation of the difference-differential {G}alois group and
  differential relations among solutions for a second-order linear difference
  equation.
\newblock {\em Commun. Contemp. Math.}, 19(6), 2017.

\bibitem[AZ22]{ArrecheZhang2020}
Carlos~E. Arreche and Yi~Zhang.
\newblock Computing differntial {G}alois groups of second-order linear
  $q$-differene equations.
\newblock {\em Advances in Applied Mathematics}, 132:102273, 2022.

\bibitem[BCC{\etalchar{+}}13]{BCCLX:2013}
Alin Bostan, Shaoshi Chen, Fr\'{e}d\'{e}ric Chyzak, Ziming Li, and Guoce Xin.
\newblock Hermite reduction and creative telescoping for hyperexponential
  functions.
\newblock In {\em ISSAC '13: Proceedings of the 2013 {I}nternational
  {S}ymposium on {S}ymbolic and {A}lgebraic {C}omputation}, pages 77--84, New
  York, NY, USA, 2013. ACM.

\bibitem[BCCL10]{BCCL:2010}
Alin Bostan, Shaoshi Chen, Fr\'{e}d\'{e}ric Chyzak, and Ziming Li.
\newblock Complexity of creative telescoping for bivariate rational functions.
\newblock In {\em ISSAC '10: Proceedings of the 2010 {I}nternational
  {S}ymposium on {S}ymbolic and {A}lgebraic {C}omputation}, pages 203--210, New
  York, NY, USA, 2010. ACM.

\bibitem[BLS13]{BLS:2013}
Alin Bostan, Pierre Lairez, and Bruno Salvy.
\newblock Creative telescoping for rational functions using the
  {G}riffiths-{D}work method.
\newblock In {\em ISSAC '13: Proceedings of the 2013 {I}nternational
  {S}ymposium on {S}ymbolic and {A}lgebraic {C}omputation}, pages 93--100, New
  York, NY, USA, 2013. ACM.

\bibitem[Bro91]{Bronstein:1991}
Manuel~Bronstein.
\newblock Formulas for series computations.
\newblock {\em AAECC 2}, pages 195--206, 1991.

\bibitem[Car21]{Caruso2021}
Xavier Caruso.
\newblock {A theory of residues for skew rational functions}.
\newblock {\em Journal de l'\'Ecole polytechnique --- Math\'ematiques},
  8:1159--1192, 2021.

\bibitem[CD21]{CarusoDurand2021}
Xavier Caruso and Amaury Durand.
\newblock Duals of linearized {R}eed-{S}olomon codes.
\newblock Preprint:
  \href{https://arxiv.org/abs/2110.12675}{\nolinkurl{arXiv:2110.12675}} 2021.

\bibitem[CDDM18]{Dreyfus2018}
Fr\'{e}d\'{e}ric Chyzak, Thomas Dreyfus, Philippe Dumas, and Marc Mezzarobba.
\newblock Computing solutions of linear {M}ahler equations.
\newblock {\em Mathematics of Computation}, 87:2977--3021, 2018.

\bibitem[CFMS21]{CFMS:2021}
Shaoshi Chen, Ruyong Feng, Pingchuan Ma, and Michael~F. Singer.
\newblock Separability problems in creative telescoping.
\newblock In {\em ISSAC '21: Proceedings of the 2021 {I}nternational
  {S}ymposium on {S}ymbolic and {A}lgebraic {C}omputation}, pages 83--90, New
  York, NY, USA, 2021. ACM.

\bibitem[Che18]{Chen:2018}
Shaoshi Chen.
\newblock Bivariate extensions of Abramov's algorithm for rational summation.
\newblock In Carsten Schneider and Eugene Zima, editors, {\em Advances in
  Computer Algebra}, pages 93--104, Cham, 2018. Springer International
  Publishing.

\bibitem[CHLW16]{CHLW:2016}
Shaoshi Chen, Qing-Hu Hou, George Labahn, and Rong-Hua Wang.
\newblock Existence problem of telescopers: beyond the bivariate case.
\newblock In {\em ISSAC '16: Proceedings of the 2016 {I}nternational
  {S}ymposium on {S}ymbolic and {A}lgebraic {C}omputation}, pages 167--174, New
  York, NY, USA, 2016. ACM.

\bibitem[CS12]{ChenSinger2012}
Shaoshi Chen and Michael~F. Singer.
\newblock Residues and telescopers for bivariate rational functions.
\newblock {\em Advances in Applied Mathematics}, 49:111--133, 2012.

\bibitem[FS09]{Flajolet_Sedgewick}
Philippe Flajolet and Robert Sedgewick.
\newblock {\em Analytic combinatorics}.
\newblock Cambridge University Press, Cambridge, 2009.

\bibitem[HS08]{HardouinSinger2008}
Charlotte Hardouin and Michael~F. Singer.
\newblock Differential {G}alois theory of linear difference equations.
\newblock {\em Mathematische Annalen}, 342:333--377, 2008.

\bibitem[HS21]{HardouinSinger2021}
Charlotte Hardouin and Michael~F. Singer.
\newblock On differentially algebraic generating series for walks in the
  quarter plane.
\newblock {\em Selecta Mathematica}, 27(5):89, 2021.

\bibitem[HW15]{HouWang:2015}
Qing-Hu Hou and Rong-Hua Wang.
\newblock An algorithm for deciding the summability of rational functions.
\newblock {\em Adv. Appl. Math.}, 64:31--49, 2015.

\end{thebibliography}
\end{document}